\newtheorem{theorem}{Theorem}[section]
\theoremstyle{definition}
\newtheorem{remark}{Remark}[section]
\numberwithin{equation}{section}
\newcommand{\poubelle}[1]{}
\begin{document}
\setcounter{page}{1}

\vspace*{1.0cm}
\title[Optimal Control of Epidemics Models]
{Optimal Control of an Impulsive VS-EIAR Epidemic Model with Application to COVID-19}
\author[M. A. Diop, M. Elghandouri \& K. Ezzinbi]{Mamadou Abdoul DIOP$^{1,4}$, Mohammed Elghandouri$^{2,3,4,*}$, Khalil Ezzinbi$^{2,4}$}
\maketitle
\vspace*{-0.6cm}

\begin{center}
{\footnotesize {\it

$^1$Department of Mathematics, Faculty of Applied Sciences and Technology, Gaston Berger University, Senegal.\\
$^2$Departement of Mathematics, Faculty of Sciences Semlalia, Cadi Ayyad University, Marrakesh, Morocco.\\
$^3$Centre INRIA de Lyon, CEI-2 56, Boulevard Niels Bohr, 69 603, Villeurbanne, France.\\
$^4$IRD-UMMISCO, 32 Av Henri Varagnat, 93 143 Bondy, France.
}}\end{center}

\vskip 4mm {\small\noindent {\bf Abstract.}
	In this work, we investigate a VS-EIAR epidemiological model that incorporates vaccinated individuals $\{V_i : i = 1, \ldots, n\}$, where $n \in \mathbb{N}^{*}$. The dynamics of the VS-EIAR model are governed by a system of ordinary differential equations describing the evolution of vaccinated, susceptible, exposed, infected, asymptomatic, and deceased population groups. Our primary objective is to minimize the number of susceptible, exposed, infected, and asymptomatic individuals by administering vaccination doses to susceptible individuals and providing treatment to the infected population. To achieve this, we employ optimal control theory to regulate the epidemic dynamics within an optimal terminal time $\tau^{*}$. Using Pontryagin's Maximum Principle (PMP), we establish the existence of an optimal control pair $(v^{*}(t), u^{*}(t))$. Additionally, we extend the model to an impulsive VS-EIAR framework, with particular emphasis on the impact of immigration and population movement. Finally, we present numerical simulations to validate the theoretical results and demonstrate their practical applicability.}

\vskip 2mm {\small\noindent {\bf Keywords.} Optimal Control; Impulsive Epidemic Models, Ordinary Differential Equations, COVID-19.}

\renewcommand{\thefootnote}{}
\footnotetext{ $^*$Corresponding author.
\par
E-mail addresses: mamadou-abdoul.diop@ugb.edu.sn (M. A. Diop), medelghandouri@gmail.com (M. Elghandouri), ezzinbi@uca.ac.ma (K. Ezzinbi).
\par
Received xx xx, 20xx; Accepted xx xx, 20xx.}

\section{Introduction}

COVID-19 has emerged as one of the most formidable global challenges, profoundly impacting economies, societies, and political systems worldwide. The World Health Organization (WHO) officially identified the first case in Wuhan, China, on December 31, 2019 \cite{3}. Common respiratory symptoms of COVID-19 include cough, fever, breathing difficulties, and shortness of breath. In severe cases, the infection can progress to life-threatening conditions such as pneumonia, severe acute respiratory syndrome, respiratory failure, and even death, as reported by the World Health Organization (WHO). In the early stages of the pandemic, quarantine and treatment were the primary measures used to curb the spread of COVID-19. However, these measures incurred significant economic costs and exacerbated existing crises, resulting in prolonged recovery periods. Fortunately, the development of vaccines has provided a more sustainable solution, enabling effective control of COVID-19 transmission and reducing reliance on stringent quarantine measures. In this study, we explore vaccination as a control strategy from a mathematical perspective. We propose a comprehensive mathematical model that advances existing literature by capturing the dynamics of COVID-19 more accurately. Through this refined model, we aim to enhance the understanding of the pandemic's spread and provide insights into effective control strategies. Abbasi et al. \cite{11} proposed an impulsive SQEIAR epidemic model to control COVID-19 transmission using two control strategies: quarantine for susceptible populations and treatment for infected individuals. Araz \cite{8} developed a comprehensive mathematical model that examines COVID-19 transmission scenarios, incorporating stability analysis, optimal control strategies, and the positiveness of solutions. In \cite{12}, the authors introduced a nonlinear deterministic model to study COVID-19 controllability using Pontryagin's Maximum Principle (PMP). They identified four time-dependent optimal control actions: $u_1^{*}$ (social distancing), $u_2^{*}$ (surface cleaning), $u_3^{*}$ (precautionary measures for exposed individuals), and $u_4^{*}$ (fumigation of public spaces). G. B. Libotte et al. \cite{7} presented an SIR model to determine the most effective vaccination strategy, proposing two optimal control approaches: one to reduce infected individuals during treatment and another to minimize both infections and vaccine concentration. They also addressed an inverse problem using Differential Evolution and Multi-objective Optimization Differential Evolution algorithms to estimate SIR model parameters. Shen and Chou \cite{10} introduced a novel optimal control model with four strategies: prevention measures, vaccine control, rapid screening of exposed individuals, and management of non-screened infected cases. Further strategies and insights are explored in \cite{16, 18, 14, 17, 13, 15, 6, 9, 19} and the references therein.

In \cite{11}, the authors aim to minimize the number of susceptible, exposed, infected, and asymptomatic individuals while maximizing quarantined and recovered populations using optimal control theory. This is achieved by minimizing a cost functional $\mathcal{J}$ associated with treatment $U(t) \in [0,1]$ and quarantine $\lambda(t) \in [0,1]$ over an optimal time interval. Pontryagin's Maximum Principle (PMP) is employed to prove the existence of an optimal control $(U^{*}(t), \lambda^{*}(t))$ that minimizes $\mathcal{J}$ \cite[Sections 2 and 4]{11}. However, given the high costs of quarantine and the availability of multiple COVID-19 vaccines, our study proposes an alternative strategy. We introduce an optimal control VS-EIAR epidemic model that replaces quarantine with vaccination. The approach involves administering $n$ vaccine doses to susceptible individuals, assuming that recovered individuals gain temporary immunity or immunity to the same virus variant. Furthermore, we assume that individuals receiving the maximum number of doses exhibit negligible or null infection rates ($\delta_{n} \simeq 0$). Additionally, the number of individuals vaccinated with the $i^{th}$ dose is assumed to be greater than those vaccinated with the $(i+1)^{th}$ dose. Further details are provided in Section \ref{section 2}, with a visual representation in Fig \ref{Fig 1}.

In the impulsive case, the model incorporates the effects of immigration and travel on population dynamics. Impulsive epidemiological models are biologically significant, as they enhance the accessibility and applicability of epidemic modeling by introducing sudden changes or interventions. While the literature on this topic is extensive, it is beyond the scope of this discussion to cover all relevant aspects. Interested readers may refer to works such as \cite{21,24,23,27,26,20,22,25} and the references therein for further details. Notably, Agarwal et al. \cite{27} examined the controllability of a generalized time-varying delay SEIR epidemic model using impulsive vaccination controls, demonstrating that impulsive vaccination can reduce discrepancies between the SEIR model and its reference model. Hui and Chen \cite{26} investigated impulsive vaccination strategies in SIR models, proving their superior applicability and effectiveness compared to classical vaccination approaches. Wang et al. \cite{25} analyzed an impulsive epidemiological model for pest control, showing that susceptible pest eradication is globally stable when the impulsive interval is below a critical threshold. In this study, we incorporate an impulsive population component to model the spread of COVID-19 through immigration and travel. Further details are provided in Section \ref{section 4}, with a visual representation in Fig \ref{Fig 2}.

In summary, this work is organized as follows. Section \ref{section 2} provides a detailed description of the VS-EIAR epidemic model and presents the mathematical framework governing its dynamics. Section \ref{section 3} discusses the optimal control strategy applied to the proposed VS-EIAR model. In Section \ref{section 4}, we explore the impulsive VS-EIAR dynamic model, focusing on the impact of immigration and travel. Section \ref{section 5} establishes the existence of an optimal control within an optimal time interval using Pontryagin's Maximum Principle. An application to COVID-19 is presented in Section \ref{section 6}, followed by a comparative analysis of three diseases (COVID-19, Ebola, and Influenza) in Section \ref{section 8}. Finally, Section \ref{section 7} concludes the study, and an appendix is included for additional details.

\section{VS-EIAR Epidemic Model} \label{section 2}
\noindent

In this section, we introduce a VS-EIAR epidemic model aimed at controlling disease propagation within a short time-frame. Building on the SEIAR epidemic model from \cite{11}, which excludes natural mortality and births, we propose a nonlinear VS-EIAR model comprising $n+6$ non-negative state variables: $V_1(t), \ldots, V_n(t), S(t), E(t), A(t), I(t), R(t),$ and $D(t)$. Here, $V_i(t)$ (for $i = 1, \ldots, n$) represents the number of individuals who have received $i$ vaccine dose(s) at time $t$ but have not yet received the $(i+1)^{th}$ dose. $S(t)$ denotes the susceptible population at risk of infection. Upon infection, susceptible individuals transition to the exposed group, $E(t)$, which includes individuals infected but not yet infectious. Exposed individuals may become infectious at rate $k$, joining either the asymptomatic group, $A(t)$, or the symptomatic group, $I(t)$. A fraction $z$ of exposed individuals move to the symptomatic group, while the remainder transition to the asymptomatic group. From the asymptomatic group, a fraction $p$ recovers, joining $R(t)$, while the remaining $(1-p)$ become symptomatic. A fraction $(1-\alpha)$ of the symptomatic population dies due to infection, with the remainder recovering. Following recommendations from the World Health Organization (WHO) and viral disease specialists, vaccination is targeted exclusively at susceptible individuals, excluding those currently infected or recently recovered. Figure \ref{Fig 1} illustrates the biological dynamics of the proposed model.
\begin{figure}[ht]
	\centering
	\scalebox{0.9}{
		\begin{tikzpicture}[node distance=2.9cm, auto, thick, >=stealth]
			
			% Nodes (compartments) - Positionnés distinctement
			\node [ellipse, draw, fill=green!60, align=center, minimum width=1.4cm, minimum height=1.4cm] (R) {R};
			
			\node [ellipse, draw, fill=red!60, below of=R, align=center, minimum width=1.4cm, minimum height=1.4cm] (I) {I};
			
			\node [ellipse, draw, fill=red!30, left of=I, align=center, minimum width=1.4cm, minimum height=1.4cm] (A) {A};
			
			\node [ellipse, draw, fill=black!10, right of=I, align=center, minimum width=1.4cm, minimum height=1.4cm] (D) {D};
			
			\node [ellipse, draw, fill=yellow!60, below of=I, align=center, minimum width=1.4cm, minimum height=1.4cm] (E) {E};
			
			\node [ellipse, draw, fill=green!50, below of=E, align=center, minimum width=1.4cm, minimum height=1.4cm] (V2) {\(V_2\)};
			
			\node [ellipse, draw, fill=green!30, left of=V2, align=center, minimum width=1.4cm, minimum height=1.4cm] (V1) {\(V_1\)};
			
			\node [ellipse, draw, fill=blue!30, left of=V1, align=center, minimum width=1.4cm, minimum height=1.4cm] (S) {S};
			
			\node [ellipse, draw, fill=green!65, right of=V2, align=center, minimum width=1.4cm, minimum height=1.4cm] (Vn1) {\(V_{n-1}\)};
			
			\node [ellipse, draw, fill=green!80, right of=Vn1, align=center, minimum width=1.4cm, minimum height=1.4cm] (Vn) {\(V_n\)};
			
			% Arrows (transitions between compartments)
			\draw[->, line width=0.7pt, >=stealth, shorten <=0.5mm, shorten >=0.5mm] (S) -- (E) node[midway, sloped, above] {\(\scriptstyle \beta\Lambda\)};
			
			\draw[->, line width=0.7pt, >=stealth, shorten <=0.5mm, shorten >=0.5mm] (V1) -- (E) node[midway, sloped, above] {\(\scriptstyle \delta_1\)};
			
			\draw[->, line width=0.7pt, >=stealth, shorten <=0.5mm, shorten >=0.5mm] (V2) -- (E) node[midway, left] {\(\scriptstyle \delta_2\)};
			
			\draw[->, line width=0.7pt, >=stealth, shorten <=0.5mm, shorten >=0.5mm] (Vn1) -- (E) node[midway, sloped, above] {\(\scriptstyle \delta_{n-1}\)};
			
			\draw[->, line width=0.7pt, >=stealth, shorten <=0.5mm, shorten >=0.5mm] (Vn) -- (E) node[midway, sloped, above] {\(\scriptstyle \delta_{n}\)};
			
			\draw[->, line width=0.8pt,draw=green!50!black, >=stealth, shorten <=0.5mm, shorten >=0.5mm] (S) -- (V1) node[midway, sloped, below] {\(\scriptstyle \gamma_1v\)};
			
			\draw[->, line width=0.8pt,draw=green!50!black, >=stealth, shorten <=0.5mm, shorten >=0.5mm] (V1) -- (V2) node[midway, sloped, below] {\(\scriptstyle \gamma_2v\)};
			
			\draw[dashed,-, line width=0.7pt,draw=green!50!black, >=stealth, shorten <=0.5mm, shorten >=0.5mm] (V2) -- (Vn1) node[midway, sloped, below] {};
			
			\draw[->, line width=0.8pt,draw=green!50!black, >=stealth, shorten <=0.5mm, shorten >=0.5mm] (Vn1) -- (Vn) node[midway, sloped, below] {\(\scriptstyle \gamma_nv\)};
			
			\draw[->, line width=0.7pt, >=stealth, shorten <=0.5mm, shorten >=0.5mm] (E) -- (A) node[midway, sloped, above] {\(\scriptstyle (1-z)k\)};
			
			\draw[->, line width=0.7pt, >=stealth, shorten <=0.5mm, shorten >=0.5mm] (E) -- (I) node[midway, right] {\(\scriptstyle zk\)};
			
			\draw[->, line width=0.7pt, >=stealth, shorten <=0.5mm, shorten >=0.5mm] (A) -- (I) node[midway, sloped, above] {\(\scriptstyle (1-p)\eta\)};
			
			\draw[->, line width=0.7pt, >=stealth, shorten <=0.5mm, shorten >=0.5mm] (I) -- (D) node[midway, sloped, above] {\(\scriptstyle (1-\alpha)f\)};
			
			\draw[->, line width=0.7pt, >=stealth, shorten <=0.5mm, shorten >=0.5mm] (A) -- (R) node[midway, sloped, above] {\(\scriptstyle p\eta\)};
			
			\draw[->, line width=0.7pt, >=stealth, shorten <=0.5mm, shorten >=0.5mm] (I) -- (R) node[midway, left] {\(\scriptstyle \alpha f\)};
			
			\draw[->, draw=green!50!black, line width=0.7pt, shorten <=0.5mm, shorten >=0.5mm, bend right=45] (I) to node[midway, right] {\(\scriptstyle u\)} (R);
		\end{tikzpicture}
	}
	\caption{The VS-EIAR epidemic model excluding impulsive growth dynamics.}
	\label{Fig 1}
\end{figure}
 
 It is natural to assume that a susceptible individual cannot receive the $(i+1)^{th}$ dose without first receiving the $i^{th}$ dose. This assumption justifies the inequality:
 \begin{center}
 	$\gamma_1 \geq \gamma_2 \geq \cdots \geq \gamma_n \geq 0$ \quad ($\gamma_1 > 0$).
 \end{center}
 Additionally, we assume that individuals who have received more vaccine doses are less susceptible to infection than those who have received fewer doses. This is reflected in the inequality:
 \begin{center}
 	$\delta_1 \geq \delta_2 \geq \cdots \geq \delta_{n-1} \geq \delta_n \simeq 0$.
 \end{center}
 We further assume that $\gamma_i \geq \delta_i$ for $i = 1, 2, \ldots, n$, where $\beta, \eta, p, k, z, \alpha, f \in [0, 1]$ are constants.
 
 The primary objective is to employ optimal control theory to mitigate the spread of the epidemic by administering vaccination $v(t)$ to susceptible individuals and treatment $u(t)$ to infected individuals. The dynamics of the controlled model are governed by the following system of ordinary differential equations (ODEs):
\begin{equation}
\left\{\begin{array}{l}
\dot{V}_1(t)= \gamma_1v(t)S(t)-\gamma_2v(t)V_1(t)-\delta_1V_1(t)\\[12pt]
\left\{\begin{array}{l}
\dot{V}_{i}(t)=\gamma_{i}v(t)V_{i-1}(t)-\gamma_{i+1}v(t)V_{i}(t)-\delta_{i}V_{i}(t),\\[12pt]
\text{for}\hspace{0.1cm} i=2,3,\ldots,n-1,
\end{array}\right.\\[22pt]
\dot{V}_n(t)=\gamma_nv(t)V_{n-1}(t)\\[12pt]
\dot{S}(t)=-(\beta\Lambda(t)+\gamma_1v(t))S(t)\\[12pt]
\dot{E}(t)=\beta\Lambda(t)S(t)-kE(t)+\sum_{i=1}^{n-1}\delta_iV_i(t)\\[12pt]
\dot{A}(t)=(1-z)kE(t)-\eta A(t)\\[12pt]
\dot{I}(t)=zkE(t)+(1-p)\eta A(t)-fI(t)-u(t)I(t)\\[12pt]
\dot{R}(t)=\alpha fI(t)+u(t)I(t)+p\eta A(t)\\[12pt]
\dot{D}(t)= (1-\alpha)f I(t),
\end{array}\right.
\label{eq 1}
\end{equation}
for $t \in [0, \tau]$, where $\tau \in \mathbb{R}^{+}$ and $\Lambda(t) = \varepsilon E(t) + (1 - q) I(t) + \mu A(t)$. Here, $\varepsilon > 0$, $1 - q > 0$, and $\mu > 0$ represent the reduced transmissibility factors for exposed, infected, and asymptomatic individuals, respectively. The initial conditions are given by:
\[
(S(0), E(0), A(0), I(0), R(0), D(0), V_1(0), \ldots, V_n(0)) = (S_0, E_0, A_0, I_0, R_0, D_0, V_{1,0}, \ldots, V_{n,0}).
\]
Let $N(t) = S(t) + E(t) + A(t) + I(t) + R(t) + \sum\limits_{i=1}^{n} V_i(t)$ denote the total population at time $t$. 

The admissible control sets $U^{1}_{ad}$ and $U^{2}_{ad}$ are defined as:
\[
U^{1}_{ad} = \left\{ v \mid v \text{ is Lebesgue measurable and } v(t) \in \left[0, \frac{1}{\gamma_1}\right] \text{ for } t \in \mathbb{R}^{+} \right\},
\]
and
\[
U^{2}_{ad} = \left\{ u \mid u \text{ is Lebesgue measurable and } u(t) \in [0, 1] \text{ for } t \in \mathbb{R}^{+} \right\}.
\]
\subsection{Existence of Solutions}
\noindent

The following theorem guarantees the existence and uniqueness of solutions for equation \eqref{eq 1}. The proof is provided in the Appendix.

\begin{theorem} \label{theorem 2.1} 
	Let $V_{1,0} \geq 0$, $\ldots$, $V_{n,0} \geq 0$, $S_0 \geq 0$, $E_0 \geq 0$, $A_0 \geq 0$, $I_0 \geq 0$, $R_0 \geq 0$, and $D_0 \geq 0$. For fixed controls $v \in U^{1}_{ad}$ and $u \in U^{2}_{ad}$, equation \eqref{eq 1} admits a unique positive bounded solution defined on $\mathbb{R}^{+}$.
\end{theorem}

The basic reproduction number for the uncontrolled model (the SEIAR epidemic model) is given by:
\begin{equation}
	\mathcal{R}_0 = \beta N_0 \left[\frac{z}{\alpha f} + \frac{\mu(1 - z)}{\eta}\right].
\end{equation}
If $\mathcal{R}_0 < 1$, the infection dies out. However, if $\mathcal{R}_0 > 1$, an epidemic occurs, necessitating the implementation of control measures. For the COVID-19 example discussed in Section \ref{section 6}, we find $\mathcal{R}_0 = 1.52 > 1$ with $\beta = 5 \times 10^{-4}$, indicating the presence of the epidemic and the need for controls.

In general, the basic reproduction number increases as the transmission coefficient $\beta$ increases or the recovery rate from the infectious class decreases, signaling the potential for an epidemic. Furthermore, since $N'(t) = (\alpha - 1) I(t) \leq 0$, the total population will eventually decline. Thus, implementing controls is crucial to stabilize the population and mitigate the epidemic.
\section{Optimal Control Problem of the VS-EIAR Epidemic Model} \label{section 3}
\noindent

In this section, we propose an optimal control strategy to minimize the number of susceptible, exposed, infected, and asymptomatic individuals by implementing vaccination $v(t)$ for susceptible individuals and treatment $u(t)$ for infected individuals. The objective is to minimize the cost functional $\mathcal{J}$, defined on $U^{1}_{ad} \times U^{2}_{ad} \times \mathbb{R}^{*}_{+}$, as follows:
\begin{equation}
	\mathcal{J}(v, u, \tau) = \int_{0}^{\tau} \left[ \mathcal{K}\left(S(s), E(s), A(s), I(s)\right) + \frac{\sigma_0}{2} u(s)^2 + \sum_{i=1}^{n} \frac{\sigma_i \gamma_i^2}{2} v(s)^2 \right] ds + \mathcal{M}(\tau),
	\label{equ 2}
\end{equation}
where
\[
\mathcal{K}\left(S(t), E(t), A(t), I(t)\right) = \omega_1 S(t) + \omega_2 E(t) + \omega_3 A(t) + \omega_4 I(t) \quad \text{for } t \in [0, \tau],
\]
and $\mathcal{M}(\cdot)$ is a convex, non-negative, increasing continuous function satisfying $\lim\limits_{t \to +\infty} \mathcal{M}(t) = +\infty$.

The term $\mathcal{K}\left(S(t), E(t), A(t), I(t)\right)$ represents the \textit{epidemic cost} at time $t$, combining weighted contributions from susceptible ($S$), exposed ($E$), asymptomatic ($A$), and infected ($I$) individuals. The weights $\omega_i$ ($i = 1, 2, 3, 4$) reflect the relative importance or severity of each group in the epidemic. The term $\frac{\sigma_0}{2} u(t)^2$ represents the \textit{control cost} associated with treatment, where $u(t)$ is the treatment control and $\sigma_0$ is the controller gain, balancing treatment efficacy and cost. The term $\sum_{i=1}^{n} \frac{\sigma_i \gamma_i^2}{2} v(t)^2$ represents the \textit{control cost} associated with vaccination, where $v(t)$ is the vaccination control, $\sigma_i$ is the controller gain, and $\gamma_i$ is the effectiveness of the $i^{th}$ vaccination dose. Finally, $\mathcal{M}(\tau)$ represents the \textit{terminal cost}, incorporating additional costs or penalties related to the final state of the epidemic. The condition $\lim\limits_{t \to +\infty} \mathcal{M}(t) = +\infty$ reflects the increasing costs of disease control over time. The goal is to find an optimal control pair $(v^{*}, u^{*})$ and an optimal finite time $\tau^{*}$ such that
\[
\mathcal{J}(v^{*}, u^{*}, \tau^{*}) = \min \left\{ \mathcal{J}(v, u, \tau) \mid (v, u) \in U_{ad}^{1} \times U_{ad}^{2}, \tau \in \mathbb{R}^{+}_{*} \right\}.
\]

The proof of the following result follows directly from Theorem 23.11 in \cite{Clakre}. The uniqueness of the solution is guaranteed by the strict convexity of the cost functional $\mathcal{J}$.

\begin{theorem} 
	There exists a unique $(v^*, u^*, \tau^*) \in U^{1}_{ad} \times U^{2}_{ad} \times \mathbb{R}^{+}_{*}$ at which the cost functional $\mathcal{J}$ attains its minimum.
	\label{thm 3.1}
\end{theorem}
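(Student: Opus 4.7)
The plan is to apply the direct method of the calculus of variations. First I would note that $\mathcal{J}$ is bounded below by zero: by Theorem~\ref{theorem 2.1} the states are non-negative, the weights $\omega_i$ are non-negative, the penalties $\tfrac{\sigma_0}{2}u^2$ and $\tfrac{\sigma_i\gamma_i^2}{2}v^2$ are non-negative, and $\mathcal{M}\ge 0$. Set $m:=\inf\mathcal{J}\ge 0$ and choose a minimizing sequence $(v_k,u_k,\tau_k)\in U^1_{ad}\times U^2_{ad}\times\mathbb{R}^{+}_{*}$ with $\mathcal{J}(v_k,u_k,\tau_k)\to m$.

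The coercivity $\mathcal{M}(t)\to+\infty$ combined with $\mathcal{M}(\tau_k)\le \mathcal{J}(v_k,u_k,\tau_k)$ yields an upper bound on $\tau_k$; modulo the mild well-posedness assumption that the infimum is not approached as $\tau\to 0^{+}$, we may pass to a subsequence with $\tau_k\to\tau^{*}>0$. Fix $T>\sup_k\tau_k$ and extend each $v_k,u_k$ by zero on $[\tau_k,T]$. Since $\|v_k\|_{L^{\infty}}\le 1/\gamma_1$ and $\|u_k\|_{L^{\infty}}\le 1$, Banach--Alaoglu provides, along a further subsequence, weak$^{*}$ limits $v^{*}\in U^1_{ad}$ and $u^{*}\in U^2_{ad}$ in $L^{2}([0,T])$ (equivalently weak-$\ast$ in $L^{\infty}$).

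Next I would transfer compactness to the trajectories. Theorem~\ref{theorem 2.1} produces a uniform $L^{\infty}$ bound on each state $V_i^k,S^k,E^k,A^k,I^k,R^k,D^k$ on $[0,T]$, which via \eqref{eq 1} yields a uniform $L^{\infty}$ bound on their derivatives; Arzel\`a--Ascoli then gives uniform convergence along a subsequence to continuous limits $(V_1^{*},\ldots,V_n^{*},S^{*},E^{*},A^{*},I^{*},R^{*},D^{*})$. Writing \eqref{eq 1} in integral (mild) form, the products of strongly convergent states with weakly convergent controls pass to the limit, so these limits solve \eqref{eq 1} with controls $(v^{*},u^{*})$ on $[0,\tau^{*}]$. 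For the cost: $\mathcal{K}$ is linear in the uniformly convergent states; the control penalties are convex quadratic functionals, hence weakly lower semi-continuous on $L^{2}$; and $\mathcal{M}$ is continuous. Combining these,
\[
\mathcal{J}(v^{*},u^{*},\tau^{*})\le\liminf_{k\to\infty}\mathcal{J}(v_k,u_k,\tau_k)=m,
\]
so $(v^{*},u^{*},\tau^{*})$ is a minimizer.

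The main obstacle will be \textbf{uniqueness}, since the control-to-state map is nonlinear and $\mathcal{J}$ is not a priori a convex functional of $(v,u,\tau)$. My plan is to deduce it a posteriori from the Pontryagin characterization announced for Section~\ref{section 5}: because $\sigma_0,\sigma_i>0$, the Hamiltonian is strictly convex in $(v,u)$, so the optimal pair is determined pointwise as an explicit (projected) affine function of the adjoint state; a Gronwall estimate applied to two putative minimizers, using the uniform bounds from Theorem~\ref{theorem 2.1}, then forces the associated state--adjoint pairs to coincide, hence $(v^{*},u^{*})$ is unique. Finally, strict monotonicity (and convexity) of $\mathcal{M}$ combined with the transversality condition $H(\tau^{*})+\mathcal{M}'(\tau^{*})=0$ pins down $\tau^{*}$.
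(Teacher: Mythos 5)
Your proposal follows the same overall strategy as the paper (direct method: minimizing sequence, boundedness of $\tau_k$ from the coercivity of $\mathcal{M}$, compactness of controls and states, lower semicontinuity, and uniqueness deferred to the Pontryagin characterization of Theorem~\ref{thm 3.2}), but the compactness machinery you use is genuinely different and, in fact, more standard. The paper extracts, for each fixed $t$, a convergent subsequence of the real numbers $v_m(t)$ and then treats the resulting $v^*$ as a pointwise a.e.\ limit to which dominated convergence applies; as written this requires a single subsequence working for uncountably many $t$ simultaneously, which bounded measurable controls do not provide. Your route --- Banach--Alaoglu for weak$^*$ limits of the controls, Arzel\`a--Ascoli for the states (using the uniform bound $N(t)\le N_0$ and the resulting derivative bounds), strong-times-weak convergence to pass to the limit in the integral form of \eqref{eq 1}, and weak lower semicontinuity of the convex quadratic penalties --- is the textbook repair of exactly this gap, and replaces the paper's unjustified blanket appeal to ``convexity of $\mathcal{J}$'' with the correct statement that only the integrand's convexity in the controls is needed. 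One caveat specific to this paper: $U^1_{ad}$ and $U^2_{ad}$ are defined as sets of \emph{almost everywhere continuous} functions, and a weak$^*$ limit of such functions need not be a.e.\ continuous, so strictly speaking your limit may leave the admissible class as the authors define it; this is a defect of the definition rather than of your argument, but it should be acknowledged. You are also right to flag that $\tau^*>0$ is not automatic (the paper silently assumes it), and that uniqueness does not follow from either proof as written --- the paper's ``see Theorem~\ref{thm 3.2}'' only supplies necessary conditions, and your Gronwall plan for two putative minimizers is the honest sketch of what would actually be required.
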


Let $X(t) = \left(S(t), E(t), A(t), I(t), R(t), D(t), V_1(t), \ldots, V_n(t)\right)^{T}$. Define the Hamiltonian function $H$ as:
\[
H(X(t), u(t), v(t), P(t), Q(t), t) = G(t) + [P(t), Q(t)]^{T} \dot{X}(t),
\]
where
\[
G(t) = \mathcal{K}\left(S(t), E(t), A(t), I(t)\right) + \frac{\sigma_0}{2} u(t)^2 + \left(\sum_{i=1}^{n} \frac{\sigma_i \gamma_i^2}{2}\right) v(t)^2,
\]
and
\[
P(t) = [p_1(t), p_2(t), \ldots, p_6(t)], \quad Q(t) = [q_1(t), q_2(t), \ldots, q_n(t)].
\]
The Hamiltonian can be expanded as:
\begin{eqnarray*}
	H &=& \omega_1 S(t) + \omega_2 E(t) + \omega_3 A(t) + \omega_4 I(t) + \frac{\sigma_0}{2} u(t)^2 + \left(\sum_{i=1}^{n} \frac{\sigma_i \gamma_i^2}{2}\right) v(t)^2 \\
	&& + p_1(t) \dot{S}(t) + p_2(t) \dot{E}(t) + p_3(t) \dot{A}(t) + p_4(t) \dot{I}(t) + p_5(t) \dot{R}(t) + p_6(t) \dot{D}(t) \\
	&& + \sum_{i=1}^{n} q_i(t) \dot{V}_i(t).
\end{eqnarray*}
The adjoint equations are given by:
\begin{eqnarray}
	\dot{P}(t) &=& -\left[ \frac{\partial H}{\partial S(t)}, \frac{\partial H}{\partial E(t)}, \frac{\partial H}{\partial A(t)}, \frac{\partial H}{\partial I(t)}, \frac{\partial H}{\partial R(t)}, \frac{\partial H}{\partial D(t)} \right],
	\label{eqn 4}
\end{eqnarray}
and
\begin{eqnarray}
	\dot{Q}(t) &=& -\left[ \frac{\partial H}{\partial V_1(t)}, \ldots, \frac{\partial H}{\partial V_n(t)} \right].
	\label{eqn 4-d}
\end{eqnarray}

The following theorem is the main result of this section, providing the explicit forms of the optimal controls $u^{*}$ and $v^{*}$. The proof is provided in the Appendix.
\begin{theorem} Let $(v^{*}, u^{*})$ be the optimal controls for equation \eqref{eq 1}, and let $S^{*}$, $E^{*}$, $A^{*}$, $I^{*}$, $R^{*}$, $D^{*}$, $V_1^{*}$, $\ldots$, $V_n^{*}$ denote the corresponding state values. Then, the optimal controls are given by:
	\begin{equation*}
	u^{*}(t)=\max\left\{\min\left\{\dfrac{I^{*}(t)\left[ p_4(t)-p_5(t)\right]}{\sigma_0},1 \right\},0\right\}, 
	\end{equation*}
	and
	\begin{equation*}
	v^{*}(t)=\max\left\{\min\left\{\dfrac{W(t)}{\left(\sum\limits_{i=1}^{n}\sigma_i\gamma_i^2\right)},\dfrac{1}{\gamma_1} \right\},0\right\},
	\end{equation*}
	where 
	\begin{equation}
	W(t)= \gamma_1S^{*}(t)[p_1(t)-q_1(t)]+\gamma_{2}q_1(t)V_1^*(t)+\sum\limits_{i=2}^{n-1}q_i(t)[\gamma_{i+1}V_i^*(t)-\gamma_{i}V^*_{i-1}(t)], 
	\label{W(t)}
	\end{equation}
	with $p_1$, $p_2$, $p_3$, $p_4$, $p_5$, $p_6$, $q_1$, $\ldots$, $q_n$ being the solutions of the adjoint equations:
	\begin{equation*}
	\left\{\begin{array}{l}
	\dot{p}_1(t)= \beta \Lambda^*(t)\left[p_1(t)-p_2(t)\right]+\gamma_1 v^*(t)\left[p_1(t)-q_1(t)\right]-\omega_1 \\[7pt]
	\dot{p}_2(t)= \beta\varepsilon S^*(t)p_1(t)+\left( k-\beta\varepsilon S^*(t)\right) p_2(t)-(1-z)k p_3(t)-zk p_4(t)-\omega_2\\[7pt]
	\dot{p}_3(t)= \beta \mu S^*(t)\left[p_1(t)-p_2(t) \right]+\eta p_3(t)-(1-p)\eta p_4(t)-\omega_3\\[7pt]
	\dot{p}_4(t)=\beta (1-q)S^*(t)\left[p_1(t)-p_2(t)\right]+ u^*(t)\left[p_4(t)-p_5(t)\right]+f(p_4(t)\\[7pt]
	\hspace{1.3cm}-\alpha p_5(t)) -\hspace{0.1cm} (1-\alpha)fp_6(t) -\omega_4\\[7pt]
	\dot{p}_5(t)= 0\\[7pt]
	\dot{p}_6(t)= 0\\[7pt]
	\dot{q}_1(t)=\delta_1\left[q_1(t)-p_2(t)\right] +\gamma_2v^*(t)\left[q_1(t)-q_2(t)\right] \\[7pt]
	\dot{q}_i(t)=-\delta_ip_2(t)+(\gamma_{i+1}v^*(t)+\delta_i)q_i(t)\quad(\text{for } i=2,3,\ldots,n-1)\\[7pt]
	\dot{q}_n(t)= 0,
	\end{array}\right.
	\end{equation*}
	for $t\in[0,\tau^*]$ with terminal conditions $p_j(\tau^*)=q_i(\tau^*)=0$ for $j=1,\ldots,6$ and $i=1,\ldots,n$. Here,  $\Lambda^*(t)=\varepsilon E^*(t)+(1-q)I^*(t)+\mu A^*(t)$ for $t\in[0,\tau^*]$.
	\label{thm 3.2}
\end{theorem}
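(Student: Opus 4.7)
The plan is to invoke Pontryagin's Maximum Principle (PMP). Existence of an optimizer $(v^*,u^*,\tau^*)$ is already ensured by Theorem \ref{thm 3.1}, so PMP simultaneously delivers the adjoint system (by differentiating $H$ in each state) and the pointwise characterization of the optimal controls (by pointwise minimization of $H$). The first step is to expand $H=G+[P,Q]^T\dot X$ by substituting every component of $\dot X$ from \eqref{eq 1}, making the dependence on every state variable and both controls completely explicit.

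Next, I compute the adjoint system via $\dot p_j=-\partial H/\partial X_j$ and $\dot q_i=-\partial H/\partial V_i$ with transversality $p_j(\tau^*)=q_i(\tau^*)=0$ (these terminal values are free because $\mathcal M(\tau)$ carries no state-dependence). The bookkeeping is elementary but deserves care at three places: (i) the force-of-infection $\beta\Lambda(t)S(t)=\beta(\varepsilon E+(1-q)I+\mu A)S$ enters $\dot S$ with a minus sign and $\dot E$ with a plus sign, so partials with respect to $E$, $A$, $I$ each produce $\beta c S^*(p_1-p_2)$ with the appropriate $c\in\{\varepsilon,\mu,1-q\}$; (ii) the treatment $u$ appears in both $\dot I$ (with $-$) and $\dot R$ (with $+$), producing the $u^*(p_4-p_5)$ contribution in $\dot p_4$; (iii) the successive coupling $\gamma_i v V_{i-1}-\gamma_{i+1}vV_i-\delta_iV_i$ yields the telescoped form of $\dot q_i$, while the leakage rates $\delta_i$ into $E$ account for the $-\delta_ip_2$ terms. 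Note that $\dot p_5\equiv\dot p_6\equiv\dot q_n\equiv0$ emerge because $R$, $D$, $V_n$ appear only in terminal equations with no feedback into the cost.

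The third step characterizes $(u^*,v^*)$. The map $(u,v)\mapsto H$ is strictly convex quadratic with positive coefficients $\sigma_0/2$ and $\tfrac12\sum_i\sigma_i\gamma_i^2$, hence the unconstrained minimizer is unique and determined by $\partial H/\partial u=0$ and $\partial H/\partial v=0$. The first equation immediately yields $u=I^*(p_4-p_5)/\sigma_0$. For the second, I collect every coefficient of $v$: the contribution $-\gamma_1S^*p_1$ coming from $\dot S$, the contributions $q_1(\gamma_1S^*-\gamma_2V_1^*)$ from $\dot V_1$, $\sum_{i=2}^{n-1}q_i(\gamma_iV_{i-1}^*-\gamma_{i+1}V_i^*)$ from $\dot V_2,\ldots,\dot V_{n-1}$, and $q_n\gamma_nV_{n-1}^*$ from $\dot V_n$. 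Using $q_n\equiv0$ (from $\dot q_n=0$ and $q_n(\tau^*)=0$), this collapses precisely to $W(t)$ in \eqref{W(t)}, giving $v=W(t)/\sum_i\sigma_i\gamma_i^2$. Projecting these unconstrained minimizers onto the admissible intervals $[0,1]$ and $[0,1/\gamma_1]$ (valid since $H$ is strictly convex in each control) produces the stated $\max\{\min\{\cdot,\cdot\},0\}$ formulas.

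\textbf{Main obstacle.} The sole nontrivial step is the algebraic bookkeeping in the computation of $\partial H/\partial v$: since $v$ appears simultaneously in $\dot S$ and in every $\dot V_i$, the cross-terms must be collected carefully and the observation that $q_n\equiv0$ must be invoked so that the result matches the compact telescoped form $W(t)$. A secondary task is the uniqueness claim deferred from Theorem \ref{thm 3.1}: this follows from strict convexity of $\mathcal J$ in $(u,v)$ combined with the PMP characterization, since the explicit $\max$–$\min$ formulas determine $(u^*,v^*)$ uniquely from the trajectories $(X^*,P,Q)$, which in turn are uniquely determined by the coupled state–adjoint boundary-value system via Cauchy–Lipschitz.
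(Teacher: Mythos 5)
Your proposal is correct and follows essentially the same route as the paper's own (very terse) appendix proof: apply PMP, obtain the adjoint system from $\dot P=-\partial H/\partial X$ and $\dot Q=-\partial H/\partial V$ with zero terminal conditions, set $\partial H/\partial u=\partial H/\partial v=0$, and project onto the admissible intervals. Your bookkeeping for $\partial H/\partial v$ — in particular the explicit use of $q_n\equiv 0$ to make the collected coefficients collapse to $W(t)$ — is a detail the paper silently omits, and your derivation is consistent with the stated formulas.
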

\begin{remark}  The control objectives are to minimize the populations of susceptible ($S$), exposed ($E$), asymptomatic ($A$), and infected ($I$) individuals within an optimal finite time $\tau^{*}$ by applying vaccination and treatment strategies. The dynamics of the susceptible population are described by:
	\[
	\dot{S}(t) = -\Theta(t) S(t) \quad \text{for } t \geq 0,
	\]
	where $\Theta(t) = \beta \Lambda(t) + \gamma_1 v(t) > 0$ for $t \geq 0$. The positiveness of $\Theta(t)$ is guaranteed by the non-negativity of the states and parameters, and the control input $v(t)$ satisfies $0 < v(t) < \frac{1}{\gamma_1}$ when necessary. This implies that $S(t)$ decreases over time, and its solution is given by:
	\[
	S(t) = \exp\left(-\int_{0}^{t} \Theta(s) ds\right) S(0) \quad \text{for } t \geq 0.
	\]
	Since $S(0) > 0$, it follows that $S(t) \to 0$ asymptotically as $t \to +\infty$. From equation \eqref{eq 1}, the variation of $V_1(t)$ for $t\geq 0$ can be written as follows:
	\[\dot{V}_1(t)=-\Theta_1(t)V_1(t)+\gamma_1v(t)S(t) \quad\text{for } t\geq 0,\]
	where $\Theta_1(t)=\delta_1+\gamma_2v(t)>0$ for $t\geq 0$. Therefore,
	\begin{eqnarray*}
		V_1(t) &=& R_1(t,0)V_1(0)+\gamma_1\displaystyle\int_{0}^{t} R_1(t,s)v(s)S(s)ds\\
		&=& R_1(t,0)V_1(0)+\gamma_1\displaystyle\int_{0}^{t} R_1(t,t-s)v(t-s)S(t-s)ds\\
		&=& R_1(t,0)V_1(0)+\gamma_1\displaystyle\int_{0}^{+\infty}\chi_{[0,t]}(s) R_1(t,t-s)v(t-s)S(t-s)ds,
	\end{eqnarray*}
	where $R_1(t,s)=\exp\left( -\int_{s}^{t}\Theta_1(r)dr \right)$ for  $t\geq s\geq 0$. Since $V_1(0)>0$, $R_1(t,0)\to 0$, $\chi_{[0,t]}(s) R_1(t,t-s)v(t-s)S(t-s)\to 0$ as $t\to +\infty$ (because $v(\cdot)$ and $R_1(\cdot,\cdot)$ are bounded and $S(t)\to 0$ as $t\to +\infty$), and  $R_1(t,t-s)v(t-s)S(t-s)\leq \frac{S(0)e^{-\delta_{1}s}}{\gamma_1} $, we use dominated convergence theorem, we obtain that $V_1(t)\to 0$ as $t\to +\infty$. In a similar manner, we get that
	\begin{eqnarray*}
		V_i(t) 
		&=& R_i(t,0)V_i(0)+\gamma_i\displaystyle\int_{0}^{t} R_i(t,s)v(s)V_{i-1}(s)ds\\
		&=& R_i(t,0)V_i(0)+\gamma_i\displaystyle\int_{0}^{t} R_i(t,t-s)v(t-s)V_{i-1}(t-s)ds\\
		&=& R_i(t,0)V_i(0)+\gamma_i\displaystyle\int_{0}^{+\infty}\chi_{[0,t]}(s) R_i(t,t-s)v(t-s)V_{i-1}(t-s)ds,
	\end{eqnarray*}
	where
	\begin{center}
		$R_i(t,s)=\exp\left( -\displaystyle\int_{s}^{t}\Theta_i(r)dr \right)$ \text{ for }  $t\geq s\geq 0$,
	\end{center} 
	with $\Theta_i(t)=\delta_i+\gamma_{i+1}v(t)>0$ for $t\geq 0$, $i=2,\ldots,n-1$. Since $V_i(0)>0$, $R_i(t,0)\to 0$, $\chi_{[0,t]}(s)R_i(t,t-s)v(t-s)V_{i-1}(t-s)\to 0$ as $t\to +\infty$ (because $R_i(\cdot,\cdot)$ is bounded and $V_{i-1}(t)\to 0$ as $t\to +\infty$), and $\chi_{[0,t]}(s)R_i(t,t-s)v(t-s)V_{i-1}(t-s)\leq \frac{N_0e^{-\delta_{i}s}}{\gamma_1}$, it follows by the dominated convergence theorem that $V_i(t)\to 0$ as $t\to +\infty$ for $i=2,\ldots,n-1$. 
	Considering that 
	\begin{center}
		$\dot{V}_n(t)=\gamma_nv(t)V_{n-1}(t)$ \text{ for } $t\geq0$.
	\end{center}
	Since, $V_{n-1}(t)\to 0$ as $t\to +\infty$, it follows that $\dot{V}_n(t)\to 0$ as $t\to +\infty$ which means that $V_n(t)$ converges to its maximum over time. The variation of exposed population takes the following form:
	\begin{center}
		$\dot{E}(t)=\beta\Lambda(t)S(t)-kE(t)+\sum\limits_{i=1}^{n-1}\delta_iV_i(t)$ \text{ for } $t\geq0$.
	\end{center}
	Then, 
	\begin{center}
		$E(t)=e^{-kt}E(0)+\displaystyle\int_{0}^{t}e^{-ks}\left[\beta\Lambda(t-s)S(t-s)+\sum\limits_{i=1}^{n-1}\delta_iV_i(t-s)\right]ds$, \quad $t\geq 0$.
	\end{center}
	Since $E(0) > 0$, $S(t) \to 0$, and $V_i(t) \to 0$ as $t \to +\infty$, in a similar manner, we can show that $E(t) \to 0$ as $t \to +\infty$ thanks to the boundedness of $\Lambda(\cdot)$.
	For asymptomatic population, we have 
	\begin{center}
		$\dot{A}(t)=-\eta A(t)+(1-z)kE(t)$ \text{ for } $t\geq 0$,
	\end{center}
	which implies that 
	\begin{center}
		$A(t)=e^{-\eta t}A(0)+(1-z)k\displaystyle\int_{0}^{t}e^{-\eta s}E(t-s)ds$ \text{ for } $t\geq 0$.
	\end{center}
	Using the fact that $E(t)\to 0$ as $t\to +\infty$ and $A(0)> 0$, we show that $A(t)$ goes to $0$ as $t\to +\infty$. From equation \eqref{eq 1}, we have
	\begin{center}
		$\dot{I}(t)=-\Pi(t)I(t)+zkE(t)+(1-p)\eta A(t) $ \text{ for } $t\geq 0$,
	\end{center}
	where $\Pi(t)=f+u(t)>0$ for $t\geq 0$. Let 
	\begin{center}
		$\varPi(t,s)=\exp\left(-\displaystyle\int_{s}^{t}\Pi(r)dr\right) $ \text{ for } $t\geq s\geq 0$.
	\end{center}
	Then,
	\begin{eqnarray*}
		I(t)&=&\varPi(t,0)I(0)+\displaystyle\int_{0}^{t}\varPi(t,s)\left[ zkE(s)+(1-p)\eta A(s)\right]ds\\
		&=& \varPi(t,0)I(0)+\displaystyle\int_{0}^{t}\varPi(t,t-s)\left[ zkE(t-s)+(1-p)\eta A(t-s)\right]ds\\
		&=& \varPi(t,0)I(0)+\displaystyle\int_{0}^{+\infty}\hspace{-0.6cm}\chi_{[0,t]}(s)\varPi(t,t-s)\left[ zkE(t-s)+(1-p)\eta A(t-s)\right]ds.
	\end{eqnarray*}
	Since $\varPi(t,0)\to 0$, $E(t)\to 0$, $A(t)\to 0$ as $t\to +\infty$, and
	\begin{center}
		$\varPi(t,t-s)\left[ zkE(t-s)+(1-p)\eta A(t-s)\right]\leq \left[ zk+(1-p)\eta \right]N_0e^{-fs}$,
	\end{center}  
	by the dominated convergence theorem, we obtain that $I(t)\to 0$ as $t\to +\infty$. Using the fact that $I(t) \to 0$ as $t \to +\infty$, it follows that $\dot{D}(t) \to 0$ as $t \to +\infty$, implying that $D(t)$ converges to its maximum value. Similarly, for the recovered population, $\dot{R}(t) \to 0$ as $t \to +\infty$, since $(I(t), A(t)) \to (0, 0)$ as $t \to +\infty$. Consequently, $R(t)$ also converges to its maximum value as $t \to +\infty$. These results demonstrate that the control objectives are achieved: by vaccinating the population at rate $v(t)$ and treating infected individuals at rate $u(t)$, the spread of the disease can be eradicated.
	\label{rem 1}
\end{remark}

\section{An Impulsive VS-EIAR Epidemic Model} \label{section 4}
\noindent

This section introduces an impulsive VS-EIAR epidemic model that incorporates population immigration or travel. The model focuses on sudden additions to the susceptible, exposed, infected, and asymptomatic groups at specific times $t_k$ (where $t_k$ represents a particular day), with rates $\lambda_i(t)$ ($0 \leq \lambda_i(t) \leq 1$). For further details, refer to Figure \ref{Fig 2}. The dynamics of the controlled model are governed by the following system of ordinary differential equations (ODEs):

\begin{equation}
\left\{\begin{array}{l}\left.\begin{array}{l}
\dot{V}_1(t)= \gamma_1v(t)S(t)-\gamma_2v(t)V_1(t)-\delta_1V_1(t) \\[12pt]
\left\{\begin{array}{l}
\dot{V}_{i}(t)=\gamma_{i}v(t)V_{i-1}(t)-\gamma_{i+1}v(t)V_{i}(t)-\delta_{i}V_{i}(t),\\[12pt]
\text{for}\hspace{0.1cm} i=2,3,\ldots,n-1,
\end{array}\right.\\[22pt]
\dot{V}_n(t)=\gamma_nv(t)V_{n-1}(t)\\[12pt]
\dot{S}(t)=-(\beta\Lambda(t)+\gamma_1v(t))S(t)\\[12pt]
\dot{E}(t)=\beta\Lambda(t)S(t)-kE(t)+\sum\limits_{i=1}^{n-1}\delta_iV_i(t)\\[12pt]
\dot{A}(t)=(1-z)kE(t)-\eta A(t)\\[12pt]
\dot{I}(t)=zkE(t)+(1-p)\eta A(t)-fI(t)-u(t)I(t)\\[12pt]
\dot{R}(t)=\alpha fI(t)+u(t)I(t)+p\eta A(t)\\[12pt]
\dot{D}(t)= (1-\alpha)fI(t)\\[12pt]
\end{array}\right\}{\begin{array}{l}
	\hspace{0.1cm}t\in[0,\tau], \hspace{0.1cm}t\neq t_k\\[12pt]
	\hspace{0.1cm} k=1,2,\ldots,p.\\[12pt]
	\hspace{0.1cm} \text{where}\hspace{0.1cm} p\in\mathbb{N}^{*}.
	\end{array}}\\[12pt]
\dot{V}_1(t^{+}_k)= \gamma_1v(t_k)S(t_k)-\gamma_2v(t_k)V_1(t_k)-\delta_1V_1(t_k) \\[12pt]
\left\{\begin{array}{l}
\dot{V}_{i}(t^{+}_k)=\gamma_{i}v(t_k)V_{i-1}(t_k)-\gamma_{i+1}v(t_k)V_{i}(t_k)-\delta_{i}V_{i}(t_k),\\[12pt]
\text{for}\hspace{0.1cm} i=2,3,\ldots,n-1,
\end{array}\right.\\[22pt]
\dot{V}_n(t^{+}_k)=\gamma_nv(t_k)V_{n-1}(t_k)\\[12pt]
\dot{S}(t^{+}_k)=-(\beta\Lambda(t_k)+\gamma_1v(t_k))S(t_k)+\lambda_1(t_k)S(t_k)\\[12pt]
\dot{E}(t^{+}_k)=\beta\Lambda(t_k)S(t_k)-kE(t_k)+\lambda_2(t_k)E(t_k)+\sum\limits_{i=1}^{n-1}\delta_iV_i(t_k)\\[12pt]
\dot{A}(t^{+}_k)=(1-z)kE(t_k)-\eta A(t_k)+\lambda_3(t_k)A(t_k)\\[12pt]
\dot{I}(t^{+}_k)=zkE(t_k)+(1-p)\eta A(t_k)-fI(t_k)+\lambda_4(t_k)I(t_k)-u(t_k)I(t_k)\\[12pt]
\dot{R}(t^{+}_k)=\alpha fI(t_k)+u(t_k)I(t_k)+p\eta A(t_k)\\[12pt]
\dot{D}(t^{+}_k)= (1-\alpha)fI(t_k).
\end{array}\right.
\label{equ 3}
\end{equation}
\begin{figure}[ht]
	\centering
	\scalebox{0.9}{
		\begin{tikzpicture}[node distance=2.9cm, auto, thick, >=stealth]
			
			% Nodes (compartments) - Positionnés distinctement
			\node [ellipse, draw, fill=green!60, align=center, minimum width=1.4cm, minimum height=1.4cm] (R) {R};
			
			\node [ellipse, draw, fill=red!60, below of=R, align=center, minimum width=1.4cm, minimum height=1.4cm] (I) {I};
			
			\node [ellipse, draw, fill=red!30, left of=I, align=center, minimum width=1.4cm, minimum height=1.4cm] (A) {A};
			
			\node [ellipse, draw, fill=black!10, right of=I, align=center, minimum width=1.4cm, minimum height=1.4cm] (D) {D};
			
			\node [ellipse, draw, fill=yellow!60, below of=I, align=center, minimum width=1.4cm, minimum height=1.4cm] (E) {E};
			
			\node [ellipse, draw, fill=green!50, below of=E, align=center, minimum width=1.4cm, minimum height=1.4cm] (V2) {\(V_2\)};
			
			\node [ellipse, draw, fill=green!30, left of=V2, align=center, minimum width=1.4cm, minimum height=1.4cm] (V1) {\(V_1\)};
			
			\node [ellipse, draw, fill=blue!30, left of=V1, align=center, minimum width=1.4cm, minimum height=1.4cm] (S) {S};
			
			\node [ellipse, draw, fill=green!65, right of=V2, align=center, minimum width=1.4cm, minimum height=1.4cm] (Vn1) {\(V_{n-1}\)};
			
			\node [ellipse, draw, fill=green!80, right of=Vn1, align=center, minimum width=1.4cm, minimum height=1.4cm] (Vn) {\(V_n\)};
			
			% Arrows (transitions between compartments)
			\draw[->, line width=0.7pt, >=stealth, shorten <=0.5mm, shorten >=0.5mm] (S) -- (E) node[midway, sloped, above] {\(\scriptstyle \beta\Lambda\)};
			
			\draw[->, line width=0.7pt, >=stealth, shorten <=0.5mm, shorten >=0.5mm] (V1) -- (E) node[midway, sloped, above] {\(\scriptstyle \delta_1\)};
			
			\draw[->, line width=0.7pt, >=stealth, shorten <=0.5mm, shorten >=0.5mm] (V2) -- (E) node[midway, left] {\(\scriptstyle \delta_2\)};
			
			\draw[->, line width=0.7pt, >=stealth, shorten <=0.5mm, shorten >=0.5mm] (Vn1) -- (E) node[midway, sloped, above] {\(\scriptstyle \delta_{n-1}\)};
			
			\draw[->, line width=0.7pt, >=stealth, shorten <=0.5mm, shorten >=0.5mm] (Vn) -- (E) node[midway, sloped, above] {\(\scriptstyle \delta_{n}\)};
			
			\draw[->, line width=0.8pt,draw=green!50!black, >=stealth, shorten <=0.5mm, shorten >=0.5mm] (S) -- (V1) node[midway, sloped, below] {\(\scriptstyle \gamma_1v\)};
			
			\draw[->, line width=0.8pt,draw=green!50!black, >=stealth, shorten <=0.5mm, shorten >=0.5mm] (V1) -- (V2) node[midway, sloped, below] {\(\scriptstyle \gamma_2v\)};
			
			\draw[dashed,-, line width=0.7pt,draw=green!50!black, >=stealth, shorten <=0.5mm, shorten >=0.5mm] (V2) -- (Vn1) node[midway, sloped, below] {};
			
			\draw[->, line width=0.8pt,draw=green!50!black, >=stealth, shorten <=0.5mm, shorten >=0.5mm] (Vn1) -- (Vn) node[midway, sloped, below] {\(\scriptstyle \gamma_nv\)};
			
			\draw[->, line width=0.7pt, >=stealth, shorten <=0.5mm, shorten >=0.5mm] (E) -- (A) node[midway, sloped, above] {\(\scriptstyle (1-z)k\)};
			
			\draw[->, line width=0.7pt, >=stealth, shorten <=0.5mm, shorten >=0.5mm] (E) -- (I) node[midway, right] {\(\scriptstyle zk\)};
			
			\draw[->, line width=0.7pt, >=stealth, shorten <=0.5mm, shorten >=0.5mm] (A) -- (I) node[midway, sloped, above] {\(\scriptstyle (1-p)\eta\)};
			
			\draw[->, line width=0.7pt, >=stealth, shorten <=0.5mm, shorten >=0.5mm] (I) -- (D) node[midway, sloped, above] {\(\scriptstyle (1-\alpha)f\)};
			
			\draw[->, line width=0.7pt, >=stealth, shorten <=0.5mm, shorten >=0.5mm] (A) -- (R) node[midway, sloped, above] {\(\scriptstyle p\eta\)};
			
			\draw[->, line width=0.7pt, >=stealth, shorten <=0.5mm, shorten >=0.5mm] (I) -- (R) node[midway, left] {\(\scriptstyle \alpha f\)};
			
			\draw[->, draw=green!50!black, line width=0.7pt, shorten <=0.5mm, shorten >=0.5mm, bend right=45] (I) to node[midway, right] {\(\scriptstyle u\)} (R);
			
			\draw[<-, line width=0.6pt, shorten <=0.5mm, shorten >=0.5mm, draw=red] (S) -- ++(0,1.5) node[midway, left] {\(\scriptstyle \lambda_1(t_k)\)};
			
			\draw[<-, line width=0.6pt, shorten <=0.5mm, shorten >=0.5mm, draw=red] (E) -- ++(-2,0) node[midway, sloped, above] {\(\scriptstyle \lambda_2(t_k)\)};
			
			\draw[<-, line width=0.6pt, shorten <=0.5mm, shorten >=0.5mm, draw=red] (I) -- ++(1.5,-1.5) node[midway, sloped, above] {\(\scriptstyle \lambda_4(t_k)\)};
			
			\draw[<-, line width=0.6pt, shorten <=0.5mm, shorten >=0.5mm, draw=red] (A) -- ++(0,1.5) node[midway, left] {\(\scriptstyle \lambda_3(t_k)\)};
		\end{tikzpicture}
	}
\caption{The VS-EIAR epidemic model incorporating impulsive growth dynamics.}
	\label{Fig 2}
\end{figure}
The following theorem guarantees the existence, uniqueness, positivity, and boundedness of solutions for equation \eqref{equ 3}. The proof is provided in the Appendix.
\begin{theorem}
	Let $V_{1,0}\geq 0$, $\ldots$, $V_{n,0}\geq 0$, $S_0\geq 0$, $E_0\geq 0$, $A_0\geq 0$, $I_0\geq 0$, $R_0\geq 0$, and $D_0\geq 0$. Let $v\in U^{1}_{ad}$ and $u\in U^{2}_{ad}$ be fixed. Then, equation $\eqref{equ 3}$  has a unique bounded positive solution defined on $\mathbb{R}^{+}$.
	\label{thm 4}
\end{theorem}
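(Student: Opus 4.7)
The plan is to construct the solution piecewise across the impulse times $0<t_{1}<\cdots<t_{p}$, applying Theorem \ref{theorem 2.1} on each inter-impulse interval and propagating through the jumps via the impulsive update rules in \eqref{equ 3}. On $[0,t_{1}]$ the system coincides with \eqref{eq 1}, so Theorem \ref{theorem 2.1} immediately supplies a unique non-negative bounded solution, yielding well-defined left-limits at $t_{1}$. The impulsive conditions then determine $\bigl(V_{1}(t_{1}^{+}),\ldots,D(t_{1}^{+})\bigr)$ as an explicit function of the pre-jump state; using this as a new initial datum, I would apply Theorem \ref{theorem 2.1} again on $[t_{1},t_{2}]$, and so on. Iterating $p$ times covers $[0,t_{p}]$, and a final application extends the solution to $[t_{p},+\infty)$. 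Uniqueness is automatic at the jump times because the update is deterministic, so the piecewise uniqueness granted by Theorem \ref{theorem 2.1} glues into global uniqueness.

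The first nontrivial point is positivity at the impulses. Between impulses, non-negativity follows from Theorem \ref{theorem 2.1}. At $t=t_{k}$ it must be verified directly from the impulsive formulas, using that $\lambda_{i}(t_{k})\in[0,1]$, $v(t_{k}),u(t_{k})\geq 0$, and all structural parameters are non-negative: each post-jump component reads as a combination of pre-jump components with non-negative coefficients (or a non-negative linear-in-state update, as is the case for $S(t_{k}^{+})$ after absorbing the loss term into the same group), hence the state remains in $\mathbb{R}_{+}^{n+6}$.

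The main obstacle will be global boundedness, since $N(t)=S+E+A+I+R+D+\sum_i V_i$ is no longer conserved across the jumps: the additive contributions $\lambda_{1}(t_{k})S(t_{k})$, $\lambda_{2}(t_{k})E(t_{k})$, $\lambda_{3}(t_{k})A(t_{k})$, $\lambda_{4}(t_{k})I(t_{k})$ can each inject additional mass. To control this, I would track $N$ across impulses. On every interval $(t_{k},t_{k+1}]$, summing the ODEs and arguing as in the proof of Theorem \ref{theorem 2.1} gives $N(t)\leq N(t_{k}^{+})$ (mass only leaks forward to $D$). At each impulse, the structure of \eqref{equ 3} together with $\lambda_i\leq 1$ yields an estimate of the form $N(t_{k}^{+})\leq C\, N(t_{k})$ for a constant $C$ depending only on $\beta$, the $\gamma_i$, $\delta_i$, and the essential suprema of $\Lambda$, $v$, $u$, $\lambda_i$. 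Since $p$ is finite, iterating produces the uniform bound $N(t)\leq C^{p}N_{0}$ on $\mathbb{R}^{+}$, from which boundedness of every individual component follows. Combining piecewise existence and uniqueness from Theorem \ref{theorem 2.1}, impulse-wise positivity, and this geometric bound on $N$ across the finitely many jumps yields the theorem.
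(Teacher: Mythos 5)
Your argument is correct in substance but follows a genuinely different construction from the paper's. For existence and uniqueness, the paper does not glue solutions interval by interval: it rewrites the whole impulsive system \eqref{equ 3} as a single non-autonomous ODE $\dot{x}(t)=L(t,x(t))$, where $L_i(t,y)=F_i(t,y)+\rho_i(t)y_i$ for $i=1,\ldots,4$ and $\rho_i$ equals $\lambda_i(t_k)$ at $t=t_k^{+}$ and $0$ elsewhere, and then invokes local $\mathbb{L}^{1}$-integrability in $t$ plus local Lipschitz continuity in the state to obtain a unique solution on a maximal interval $[0,t_{\max})$. Your piecewise approach --- apply Theorem \ref{theorem 2.1} on $[0,t_1]$, push through the jump, repeat finitely many times --- is the more standard treatment of impulsive systems and is arguably cleaner, since a $\rho_i$ supported on finitely many points does not actually alter the integral equation; it does, however, require you to read the impulse conditions (which the paper states as conditions on the right-derivatives $\dot{X}(t_k^{+})$) as deterministic state updates $X(t_k^{+})=X(t_k^{-})+(\text{jump})$, an interpretation the paper itself adopts only in its boundedness step. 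For positivity both arguments reduce to the proof of Theorem \ref{theorem 2.1}, with your additional (easy) verification at the jump times. For boundedness the two proofs track the same quantity in slightly different bookkeeping: the paper runs a case analysis on where $t_{\max}$ falls among $t_1,\ldots,t_p$ and accumulates the jumps additively, $N(t)\leq N_0+\sum_{i=1}^{p}\left[N(t_i^{+})-N(t_i^{-})\right]$, whereas you bound each jump multiplicatively, $N(t_k^{+})\leq C\,N(t_k^{-})$ with $C\leq 2$ coming from $\lambda_i\leq 1$, giving $N(t)\leq C^{p}N_0$. Both yield a finite uniform bound because $p$ is finite, hence $t_{\max}=+\infty$; your multiplicative estimate is cruder but makes the finiteness of the total injected mass self-evident, while the paper's additive form still implicitly relies on the bound from the preceding interval to know that each jump term is finite.
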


\section{Optimal Control Problem of the Impulsive VS-EIAR Epidemic Model} \label{section 5}
\noindent

Following the approach in Section \ref{section 3}, we aim to minimize the cost functional $\mathcal{J}$ defined in equation \eqref{equ 2}. The existence of a minimum for $\mathcal{J}$ is also guaranteed in the impulsive case. We use Theorem 23.11 in \cite{Clakre}. The following theorem is the main result of this section, and its proof follows similarly to that of Theorem \ref{thm 3.1}.

\begin{theorem} Let $(u^*,v^*)$ be the optimal controls of equation $(\ref{equ 3})$ and $S^{*}$, $E^{*}$, $A^{*}$, $I^{*}$, $R^{*}$, $D^{*}$, $V_1^{*}$,$\ldots$,$V_n^{*}$ be the states values corresponding to $(u^*,v^*)$. Then,
	\begin{equation*}
	u^{*}(t)=\max\left\{\min\left\{\dfrac{I^{*}(t)\left[ p_4(t)-p_5(t)\right]}{\sigma_0},1 \right\},0\right\}, 
	\end{equation*}
	and
	\begin{equation*}
	v^{*}(t)=\max\left\{\min\left\{\dfrac{W(t)}{\left(\sum\limits_{i=1}^{n}\sigma_i\gamma_i^2\right)},\dfrac{1}{\gamma_1} \right\},0\right\},
	\end{equation*}
	where $W(t)$ is given by \eqref{W(t)}, and  $p_1$, $p_2$, $p_3$, $p_4$, $p_5$, $p_6$, $q_1$, $\ldots$, $q_n$ being the solutions of the following equation:
	\begin{equation*}
	\left\{\begin{array}{l}
	\dot{p}_1(t)= \beta \Lambda^*(t)\left[p_1(t)-p_2(t)\right]+\gamma_1 v^*(t)\left[p_1(t)-q_1(t)\right]-\varrho_1(t)-\omega_1 \\[12pt]
	\dot{p}_2(t)= \beta\varepsilon S^*(t)[p_1(t)-p_2(t)]+ k \left[ p_2(t)-(1-z) p_3(t)-z p_4(t)\right]-\varrho_2(t)-\omega_2\\[12pt]
	\dot{p}_3(t)= \beta \mu S^*(t)\left[p_1(t)-p_2(t) \right]+\eta p_3(t)-(1-p)\eta p_4(t)-\varrho_3(t)-\omega_3\\[12pt]
	\dot{p}_4(t)=\beta (1-q)S^*(t)\left[p_1(t)-p_2(t)\right]+ u^*(t)\left[p_4(t)-p_5(t)\right]+f(p_4(t)-\alpha p_5(t))\\[12pt]
	\hspace{1.3cm} - \hspace{0.1cm} (1-\alpha)fp_6(t) -\varrho_4(t) -\omega_4\\[12pt]
	\dot{p}_5(t)=0\\[12pt]
	\dot{p}_6(t)=0\\[12pt]
	\dot{q}_1(t)=\delta_1\left[q_1(t)-p_2(t)\right] +\gamma_2v^*(t)\left[q_1(t)-q_2(t)\right] \\[12pt]
	\dot{q}_i(t)=-\delta_ip_2(t)+(\gamma_{i+1}v^*(t)+\delta_i)q_i(t)\quad(\text{for } i=2,3,\ldots,n-1),\\[12pt]
	\dot{q}_n(t)=0,
	\end{array}\right.
	\end{equation*}
	for $t\in[0,\tau^*]$ with terminal conditions $p_j(\tau^*)=q_i(\tau^*)=0$ for $j=1,\ldots,6$ and $i=1,\ldots,n$. Here,  $\Lambda^*(\cdot)=\varepsilon E^*(\cdot)+(1-q)I^*(\cdot)+\mu A^*(\cdot)$, and $\varrho_l(t)=\lambda_l(t)$ if $t=t^{+}_k$ ($\varrho_l(t)=0$ if $t\neq t^{+}_k$), for $k=1,\ldots,p$ and $l=1,\ldots,4$.
	\label{theorem 5.1}
\end{theorem}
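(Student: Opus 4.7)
The plan is to adapt the Pontryagin maximum principle argument of Theorem~\ref{thm 3.2} to the impulsive setting of \eqref{equ 3}. Between two consecutive impulse times $t_{k-1}$ and $t_k$ the system is governed by exactly the same ODE as \eqref{eq 1}, so the Hamiltonian $H$ is defined identically to Section~\ref{section 3} and all the pointwise computations on the continuous pieces carry over verbatim. Existence of an optimal triple $(v^*,u^*,\tau^*)$ would be obtained by repeating the compactness and dominated-convergence scheme of Theorem~\ref{thm 3.1}, using that the solutions of \eqref{equ 3} depend continuously on the controls across the finitely many impulse times, as guaranteed by Theorem~\ref{thm 4}.

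For the pointwise characterization of $u^*$ and $v^*$, I would apply the minimum condition $\partial H/\partial u=0$ and $\partial H/\partial v=0$ on each subinterval $(t_{k-1},t_k)$. Since the controls act only on the continuous part of the dynamics, the resulting expressions are the same as in Theorem~\ref{thm 3.2}: one obtains $\sigma_0 u - I^*(p_4-p_5)=0$ and $\bigl(\sum_{i=1}^n \sigma_i\gamma_i^2\bigr) v - W(t)=0$ with $W$ defined as in \eqref{W(t)}. Projecting these unconstrained minimizers onto the admissible sets $[0,1]$ and $[0,1/\gamma_1]$ via the $\max\{\min\{\cdot\}\}$ truncation yields the two stated closed-form expressions.

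The genuinely new step is the derivation of the adjoint system. On each open subinterval $(t_{k-1},t_k)$ the adjoint vectors $P$ and $Q$ satisfy $\dot P = -\partial H/\partial(S,E,A,I,R,D)$ and $\dot Q = -\partial H/\partial(V_1,\dots,V_n)$, reproducing exactly the continuous pieces of the equations of Theorem~\ref{thm 3.2}. At each impulse time $t_k$ the state undergoes a jump of the form $x_l(t_k^+) - x_l(t_k^-) = \lambda_l(t_k)\,x_l(t_k)$ for $l\in\{S,E,A,I\}$, and the impulsive version of PMP prescribes a matching jump in the dual variable obtained by differentiating the jump map with respect to the state. After collecting terms, this jump produces precisely the extra $-\varrho_l(t)$ contribution on the right-hand side of each $\dot p_l$, while $\dot q_i$ remains unchanged since the $V_i$ are not impulsed. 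The transversality conditions $p_j(\tau^*)=q_i(\tau^*)=0$ follow from the free-endpoint nature of the problem at $\tau^*$, exactly as in the non-impulsive case.

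The main obstacle I anticipate is the careful bookkeeping for the impulsive PMP: one must track how the Jacobian of each jump map $x \mapsto x + \lambda_l(t_k)\,x\,e_l$ transforms the adjoint across $t_k$ and verify that this jump is faithfully encoded by the function $\varrho_l$ defined in the statement. Once this is pinned down, uniqueness of $(v^*,u^*,\tau^*)$ follows from the strict convexity of the integrand in $(u,v)$ together with the same monotonicity argument for $\tau^*$ used in Theorem~\ref{thm 3.1}, and the proof is complete.
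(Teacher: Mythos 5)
Your proposal takes essentially the same route as the paper: the paper's entire proof of Theorem \ref{theorem 5.1} is the single line that it is ``similar to'' the earlier argument (it cites Theorem \ref{thm 3.1}, evidently meaning the PMP computation of Theorem \ref{thm 3.2}), namely impose the stationarity conditions $\partial H/\partial u=0$ and $\partial H/\partial v=0$, truncate onto the admissible sets, and read the adjoint system off $\dot P=-\partial H/\partial X$, which is exactly what you do on each inter-impulse interval. Your extra bookkeeping for the jump maps at the $t_k$ (yielding the $-\varrho_l$ terms) supplies detail the paper omits entirely and is consistent with the stated adjoint equations.
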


\begin{remark} Considering that $\dot{S}(t)=- (\Theta(t)-\varrho_1(t))S(t)$ for $t\geq0$, where $\Theta(t)=\beta\Lambda(t)+\gamma_1v(t)>0$ for $t\geq 0$ and $\varrho_1(t)$ is defined as in Theorem \ref{theorem 5.1}. Then,
	\begin{center}
		$S(t)=\exp\left(-\displaystyle\int_{0}^{t}\left[\Theta(s)-\varrho_1(s)\right]ds\right)S(0)$ \text{ for } $t\geq 0$.
	\end{center}
	Since $0\leq \varrho_1(t)\leq 1$, and $v(t)$ is the control input and when needed, we can choose $v(t)=\frac{1}{\gamma_1}$, it follows that $\Theta(t)>\varrho_1(t)$ given that the parameters and states are positives. Thus, since $S(0)> 0$, we show that  $S(t)\to 0$ as $t\to+\infty$. According to remark \ref{eq 1}, in a similar manner, we can prove that $V_i(t)$ goes to $0$ for $i=1,\ldots,n-1$, and $V_n(t)$ converges to its maximum as $t\to +\infty$. For exposed individuals, we have
	\begin{center}
		$\dot{E}(t)=-(k-\varrho_2(t))E(t)+\beta\Lambda(t)S(t)+\sum\limits_{i=1}^{n-1}\delta_iV_i(t)$ \text{ for } $t\geq 0$,
	\end{center}
	where $\varrho_2(\cdot)$ is defined as in Theorem \ref{theorem 5.1}. Let 
	\begin{center}
		$\Upsilon_1(t,s)=\exp\left(-\displaystyle\int_{s}^{t}(k-\varrho_2(r))dr\right)$ \text{ for } $t\geq s\geq 0$.
	\end{center} 
	Then,
	\begin{center}
		$E(t)=\Upsilon_1(t,0)E(0)+\displaystyle\int_{0}^{t}\Upsilon_1(t,t-s)\left[\beta\Lambda(t-s)S(t-s)+\sum\limits_{i=1}^{n-1}\delta_iV_i(t-s) \right]ds$ \text{ for } $t\geq 0$.
	\end{center}
	Furthermore, for $t\geq s\geq 0$, we have
	\begin{eqnarray*}
		\Upsilon_1(t,s)&=&\exp\left(-k(t-s)+\displaystyle\int_{s}^{t}\varrho_2(r)dr\right)\\
		&=& \exp\left(-k(t-s)+\sum\limits_{s\leq t_k\leq  t}\lambda_2(t_k)\right)\\
		& \leq & \exp\left(-k(t-s)+\sum\limits_{k=1}^{p}\lambda_2(t_k)\right).
	\end{eqnarray*}
	Moreover, $$\Upsilon_1(t,t-s)\leq \exp\left(-ks+\sum\limits_{k=1}^{p}\lambda_2(t_k)\right) \text{ for } t\geq s\geq 0.$$ Since $S(t)\to 0$, and $V_i(t)\to 0$ (for $i=1,\ldots,n-1$) as $t\to +\infty$, it follows that $E(t)\to 0$ as $t\to +\infty$ provided that $E(0)>0$.
	Considering that
	\begin{center}
		$\dot{A}(t)=-(\eta-\varrho_3(t)) A(t)+(1-z)kE(t)$ \text{ for } $t\geq 0$
	\end{center}
	where $\varrho_3(\cdot)$ is defined as in Theorem \ref{theorem 5.1}. Let 
		$\Upsilon_2(t,s)=\exp\left(-\int_{s}^{t}(\eta-\varrho_3(r))dr\right)$ for $t\geq s\geq 0$.
	Then,
	\begin{center}
		$A(t)=\Upsilon_2(t,0)A(0)+(1-z)k\displaystyle\int_{0}^{t}\Upsilon_2(t,t-s)E(t-s)ds$ \text{ for } $t\geq 0$.
	\end{center}
	As previously, we can show that 
	\begin{center}
		$\Upsilon_2(t,s)\leq \exp\left(-\eta(t-s)+\sum\limits_{k=1}^{p}\lambda_3(t_k)\right)$ \text{ for } $t\geq s\geq 0$,
	\end{center}
	which implies that, $$\Upsilon_2(t,t-s)\leq \exp\left(-\eta s+\sum\limits_{k=1}^{p}\lambda_3(t_k)\right) \text{ for } t\geq s\geq 0.$$ Since $A(0)> 0$, $\Upsilon_2(t,0)\to 0$, and $E(t)\to 0$ as $t\to +\infty$, it follows that $A(t)\to 0$ as $t\to +\infty$.
	For infected population, we have 
	\begin{center}
		$\dot{I}(t)=-(f+u(t)-\varrho_4(t))I(t)+zkE(t)+(1-p)\eta A(t)$ \text{ for } $t\geq 0$,
	\end{center} 
	where $\varrho_4(\cdot)$ is defined as in Theorem \ref{theorem 5.1}. Let
	\begin{center}
		$\Upsilon_3(t,s)=\exp\left(-\displaystyle\int_{s}^{t}(f+u(r)-\varrho_4(r))dr\right)$ \text{ for } $t\geq s\geq0$,
	\end{center}
	then
	\begin{center}
		$I(t)=\Upsilon_3(t,0)I(0)+\displaystyle\int_{0}^{t}\Upsilon_3(t,t-s)\left[zkE(t-s)+(1-p)\eta A(t-s)\right]ds$  \text{ for } $t\geq 0$.
	\end{center}
	Since $0\leq \lambda_4(t)\leq 1$, and $u(t)$ is the control input and when is needed, we can choose $u(t)=1$, it follows that $f+u(t)>\varrho_4(t)$, which implies that $\Upsilon_3(t,0)\to 0$ as $t\to +\infty$. Using the fact that $E(t)\to 0$, and $A(t)\to 0$ when $t$ goes to infinity, we prove that $I(t)\to 0$ as $t\to +\infty$. For the deceased population, we observe that $\dot{D}(t) \to +\infty$ as $t \to +\infty$, which implies that $D(t)$ converges to its maximum value. Additionally, since $E(t)$, $A(t)$, and $I(t)$ decrease to zero as $t$ goes to infinity, we find that $\dot{R}(t) \to 0$ as $t \to +\infty$, which implies that $R(t)$ converges to its maximum as $t$ goes infinity. Consequently, the control objectives for the impulsive model are achieved.
	\label{rem 4}
\end{remark}
\begin{remark} 
	In the impulsive case, additional individuals join the susceptible, exposed, asymptomatic, and infected groups at rates $\lambda_i(t)$. These individuals contribute to the spread of the pandemic, ensuring that the populations of these groups remain non-zero for an extended period. However, the controller can still achieve the eradication of disease propagation.
\end{remark}
\section{Application to COVID-19} \label{section 6}
\noindent

The parameters used in this study are derived from \cite{Li} and \cite{Riou}, with some values adjusted to reflect the specific characteristics of COVID-19. The parameters and initial states are summarized in Table \ref{table1_Num}. We assume that susceptible individuals receive two doses of vaccination, i.e., $n = 2$. Figures \ref{Fig 4}--\ref{Fig 10} and \ref{Fig 14}--\ref{Fig 20} illustrate the variation in the number of individuals across each group, comparing scenarios with and without controls for both models. The red curves represent the uncontrolled cases, while the green curves represent the controlled cases.

\begin{table}[h!]
	\centering
	\caption{Initial states and Model parameters}\label{table1_Num}
	\begin{tabular}{ccc} 
		\hline
		Initial States & Value \\
		\hline \\[-8pt]
		$S_0$ & $8\times 10^{3}$\\ 
		$E_0$ & $1\times 10^{3}$\\ 
		$A_0$ & $5\times 10^{2}$\\ 
		$I_0$ & $5\times 10^{2}$\\ 
		$R_0$, $D_0$ & 0\\ 
		$V_{1,0}$, $V_{2,0}$  & 0\\
		\hline
	\end{tabular}
	\hspace{0.3cm}
	\begin{tabular}{ccc} 
		\hline
		Parameters & Value \\
		\hline \\[-8pt]
		$p$ & 0.02\\
		$q$ & 0.5\\
		$\eta$ & 0.3\\
		$z$ & 0.1\\ 
		$\varepsilon$ & 0\\
		$\mu$ & 1\\  
		\hline
	\end{tabular}
	\hspace{0.3cm}
	\begin{tabular}{ccc} 
		\hline
		Parameters & Value \\
		\hline \\[-8pt]
		$\delta_1$ & $5\times 10^{-4}$\\
		$\alpha$ & 0.995\\
		$k$ & 0.54\\
		$f$  & 0.3\\
		$\gamma_1$ & 1\\ 
		$\gamma_2$ & 1\\ 
		\hline
	\end{tabular}
\end{table}

\subsection{The VS-EIAR Epidemic Model}
\noindent

Figure \ref{Fig 12} represents the evolution of vaccinated individuals over time for the VS-EIAR epidemic model. It is clear that the population with two doses of vaccine comprises almost $7800$ persons, which is approximately $78\%$ of the entire population. Figure \ref{Fig 4} depicts the development of susceptible individuals with and without controls over a $35$-day period. As demonstrated, the susceptible population reaches zero in around five days when controls are present. In contrast, when controls are absent, the susceptible population either never reaches zero, or it takes a longer period of time to reach zero. Note that while controls are absent, nearly all susceptible individuals fall into the group of exposed population (see Figure \ref{Fig 4}), whereas in the presence of controls, almost $78\%$ of them fall into the group of vaccinated individuals (see Figure \ref{Fig 12}). Figure \ref{Fig 4} illustrates the changes in exposed individuals with and without controls over time. Be aware that in the absence of controls, the number of exposed population takes a long time (more than $35$ days) to reach zero. This is typical given that, during this time period, the population is moving in from the susceptible group (see Figure \ref{Fig 4}). However, when controls are present, the number decreases to zero in around $12$ days. This is because a few persons have been pulled from the susceptible population as a result of the applied controls. Figure \ref{Fig 6} shows the progression of both the asymptomatic population with and without controls. In the absence of controls, the number of asymptomatic individuals increases widely from the first day to the $20th$ day, reaching roughly $1350$ persons, which is a significant quantity when compared to the total population (around $10000$ persons). This number decreases to around $800$ individuals in the presence of controls. Additionally, when controls are in place, the number of individuals in this group tends to decrease rapidly (within about $25$ days), which is not the case when controls are absent. Figure \ref{Fig 6} compares the number of infected population over time with and without controls. It is obvious that the number of infected individuals significantly increases from the first day to the $25th$ day when controls are absent, going from $500$ on the first day to almost $1350$ persons, a statistic not suggested when compared to the total population. By contrast, in the presence of controls, it nearly disappears to nil in around $20$ days. Figure \ref{Fig 8} represents the number of recovered individuals from the virus after $35$ days, both with and without a controller. We may observe that the number of recovered individuals decreases in the presence of controls, while it increases in the absence of controls. When controls are in place, we recover just approximately $200$ individuals, compared to when controls aren't applied, where we recover almost $95\%$ of the total population. This explains the fact that when susceptible individuals are vaccinated against the infection, fewer individuals will get infected, and consequently, fewer individuals will recover from it (see also Figure \ref{Fig 12}). Figure \ref{Fig 8} depicts the progression of deaths because of the infection both with and without controls. The number of deaths with controls cannot exceed $3$ persons per $10000$ persons, which is more acceptable. By contrast, in the absence of controls, the number of deaths keeps increasing to reach more than $44$ persons during $35$ days, which is unnatural. As a result, the availability of vaccines allows us to reduce the number of infected individuals and, ultimately, the number of deaths. Figure \ref{Fig 10} shows that in the absence of controls, a higher percentage of the population dies (about $0.44\%$ of the total population die from their infection during $35$ days). By contrast, with controls, more of the population stays alive, approximately $9997$ (around $ 10000$). The reason the number of persons spared from the virus is not exactly the same as the total population is that $0.03\%$ of the population dies because of the infection (more of them are not vaccinated). Figure \ref{Fig 10} represents the absolute difference between the total populations $N(t)$ both with and without controls. We can see that in the presence of controls, we can save about $41$ persons from deaths during $35$ days. Figure \ref{Fig 12} depicts two cases: recovered individuals with and without controls. It is clear that in the absence of controls, the recovered population from the virus is approximately equal to the number of the total population, which means that all population is infected by the disease. A part of them dies because of the infection. By contrast, if we vaccinate the susceptible individuals, we can save about $78\%$ of the total population from the infection, while recovering $21.97\%$ from it.
\subsection{The impulsive VS-EIAR Epidemic Model}
\noindent

Figure \ref{Fig 22} illustrates the changes in the vaccinated population with the impulsive rate of growth. According to this data, over $8000$ susceptible individuals have received vaccinations during a $35$-day period. From Figure \ref{Fig 14}, we can notice that the additional population can increase the number of susceptible individuals in this group. However, if we put controls in place, we can eliminate them completely within ten days. Figure \ref{Fig 14} provides information about the exposed population with the impulsive rate of growth during $35$ days. It is evident that the number of exposed individuals grows as a result of the additional population, but in the presence of controls, we can reduce that number to zero, whereas in the absence of controls, we cannot. Figure \ref{Fig 16} shows that the additional population initially increases the number of asymptomatic individuals before starting to decrease, but this process takes more time, leading to more infections. However, with controls in place, we can rapidly eradicate infections and bring the number of asymptomatic individuals down to zero. According to Figure \ref{Fig 16}, we can notice that in the absence of controls, there is a significant increase in the number of infected individuals starting from the first day to the $20th$ day, primarily due to the additional population. Additionally, the infected population will never go extinct, at least not for a long time. However, with controls in place, the number of infected individuals decreases rapidly, eventually reaching zero. This indicates the effectiveness of controls in eradicating the infection, resulting in fewer deaths and recoveries, as shown in Figures \ref{Fig 18} and \ref{Fig 18}. Figure \ref{Fig 18} compares the number of recovered individuals from the virus in the absence and in the presence of controls. It is clear that without controls, we can recover more than $9600$ persons from the disease, which is roughly the entire population. Contrarily, when controls are present, we can only recover around $3000$ population, the majority of whom are unvaccinated or have incomplete vaccination. Figure \ref{Fig 18} illustrates the evolution of the deaths population with the impulsive rate of growth, both with and without controls, showing that in the absence of controls, there are many deaths, with almost $45$ persons succumbing to the infection during $35$ days. This rate is concerning for a community of no more than $12,000$ persons. However, when controls are implemented, the percentage of deaths significantly reduces to $0.03\%$, which is more acceptable. The difference between the changes in the overall population size under the proposed controls and those that occur when there are no controls is shown in Figure \ref{Fig 20}. As we can see, we can keep more of the population alive in the presence of controls, while we lose a part of them in the absence of controls. Additionally, it is noticeable that the population is not constant since the addition of the new population. In Figure \ref{Fig 20}, the difference between the population with the impulsive rate of growth in the presence and absence of controls is demonstrated. Significantly, with the implementation of controls, we are able to maintain over 44 individuals per month, which carries important implications for a population not exceeding 12,000 people. This demonstrates the effectiveness of controls in ensuring a more stable and sustainable population, unlike the scenario without controls, where population growth is less regulated, leading to potential fluctuations. Figure \ref{Fig 22} provides a summary of the development of both recovered and vaccinated individuals. The data clearly indicates that immunizing a larger portion of the population against the virus leads to saving more lives. With increasing vaccination rates, the number of recovered individuals also rises, as a significant portion of the population becomes immune to the virus. This emphasizes the importance of widespread vaccination efforts in curbing the impact of the virus on the population and reducing the overall burden on healthcare systems.

\begin{figure}[ht]
	\centering
	\includegraphics[scale=0.86]{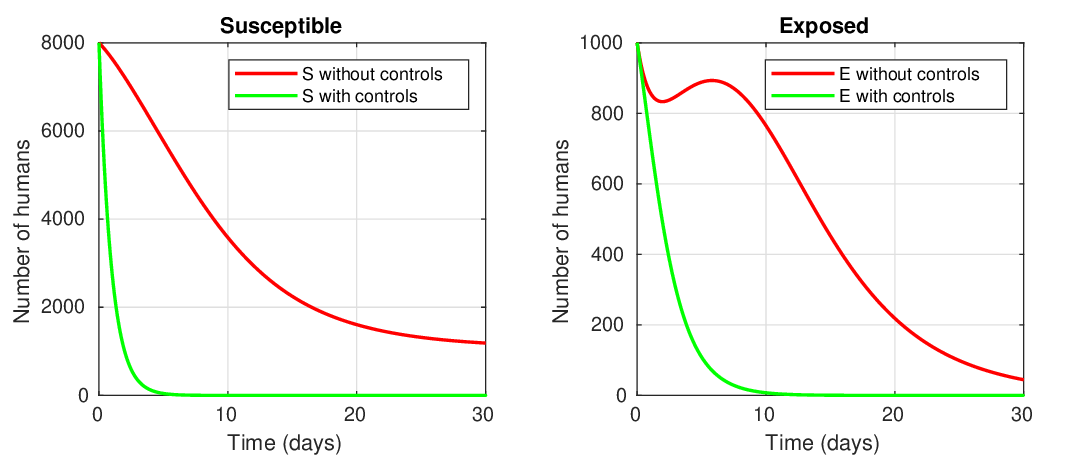} % Remplacez 'figure2' par le nom de votre deuxième image
	\caption{Changes in the Susceptible and Exposed groups without the impulsive rate of growth.}
	\label{Fig 4}
\end{figure}

\begin{figure}[ht]
	\centering
	\includegraphics[scale=0.86]{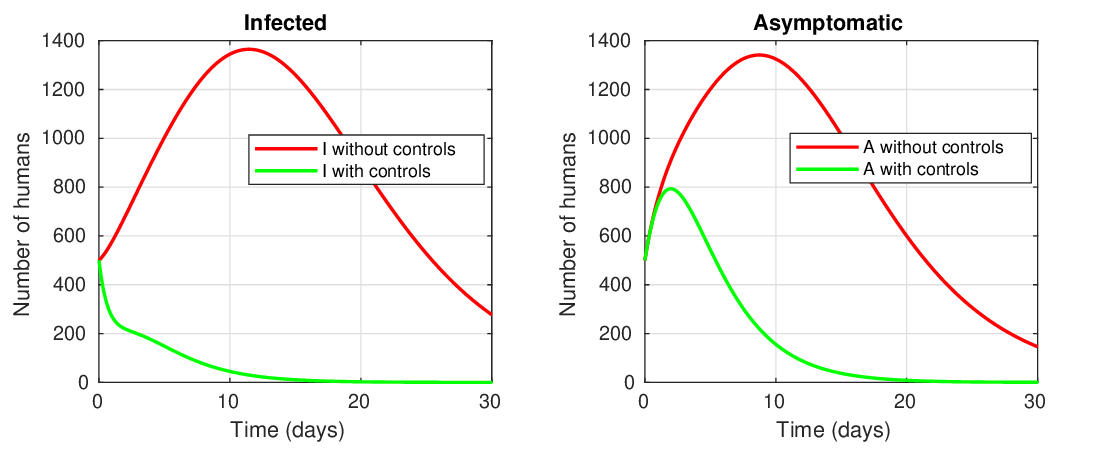} % Remplacez 'figure2' par le nom de votre deuxième image
	\caption{Changes in the Asymptomatic and Infected groups without the impulsive rate of growth.}
	\label{Fig 6}
\end{figure}

\begin{figure}[ht]
	\centering
	\includegraphics[scale=0.86]{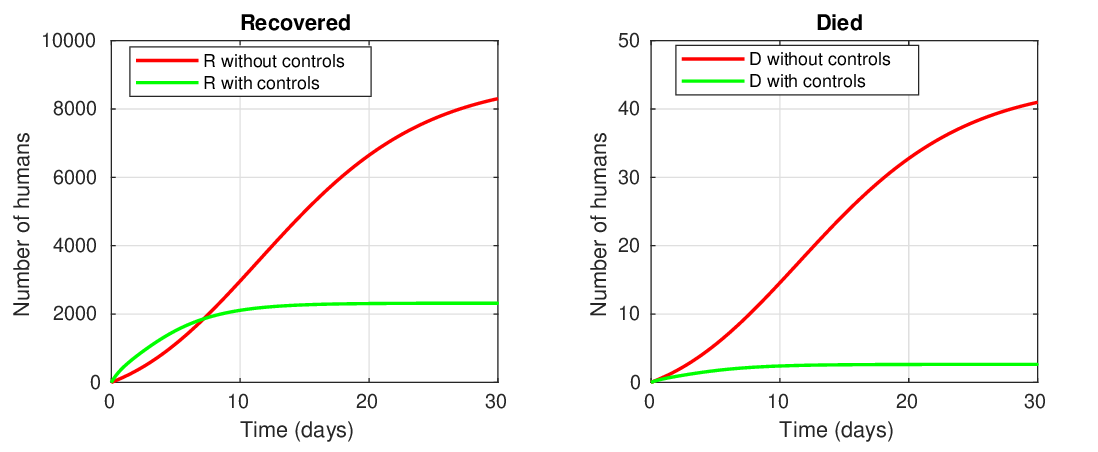} % Remplacez 'figure2' par le nom de votre deuxième image
	\caption{Changes in the Recovered and Death groups  without the impulsive rate of growth.}
	\label{Fig 8}
\end{figure}

\begin{figure}[ht]
	\centering
	\includegraphics[scale=0.86]{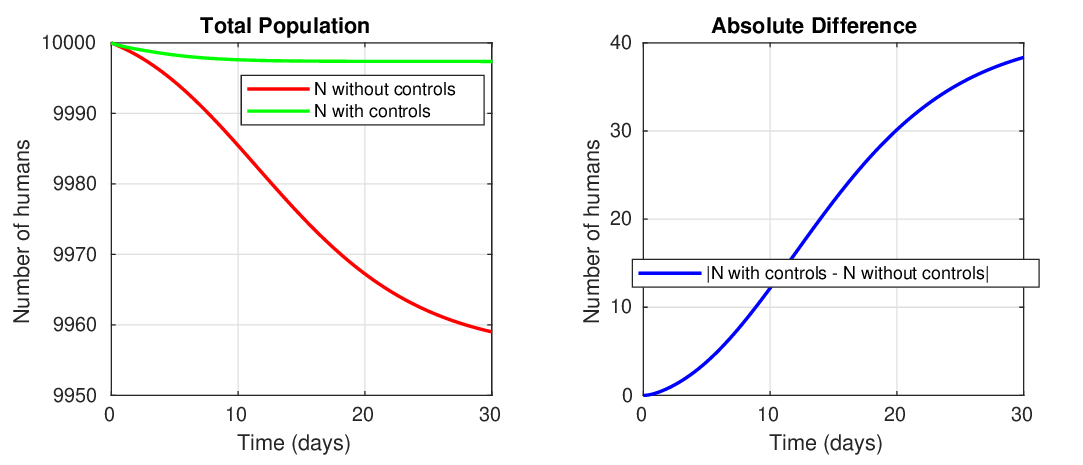} % Remplacez 'figure2' par le nom de votre deuxième image
	\caption{Changes in the total population without the impulsive rate of growth.}
	\label{Fig 10}
\end{figure}

\begin{figure}[ht]
	\centering
	\includegraphics[scale=0.86]{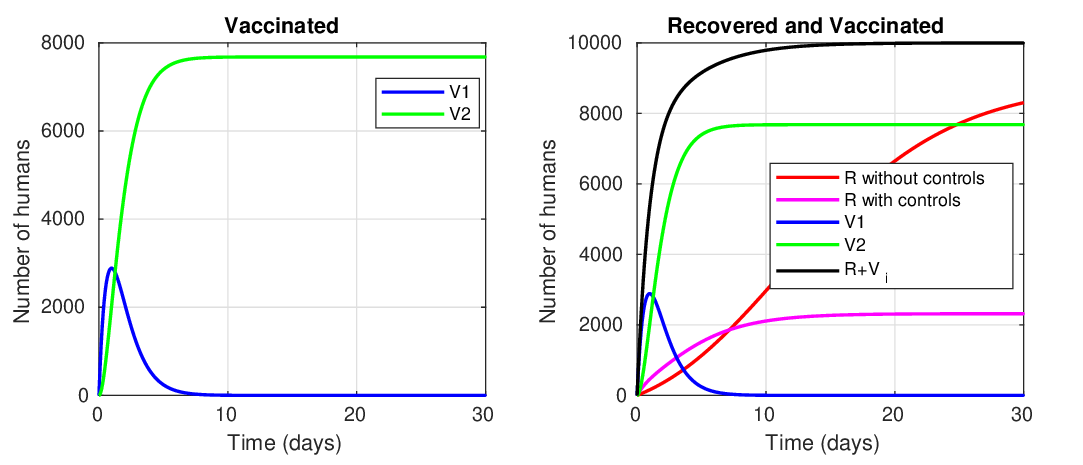} % Remplacez 'figure2' par le nom de votre deuxième image
	\caption{Comparison between Recovered and Vaccinated population and a total of them with and without controls.}
	\label{Fig 12}
\end{figure}
%------------------------------------------

\begin{figure}[ht]
	\centering
	\includegraphics[scale=0.86]{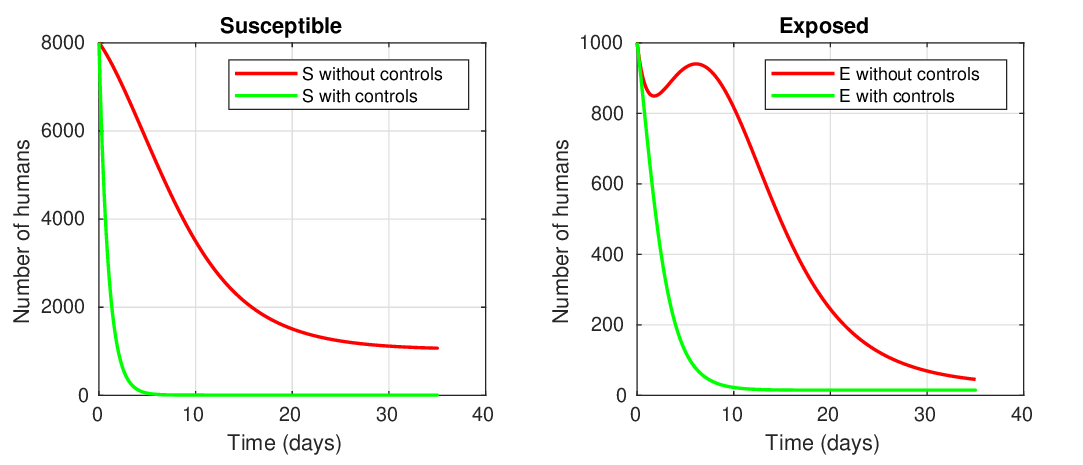} % Remplacez 'figure2' par le nom de votre deuxième image
	\caption{Changes in Susceptible and Exposed groups with the impulsive rate of growth.}
	\label{Fig 14}
\end{figure}

\begin{figure}[ht]
	\centering
	\includegraphics[scale=0.86]{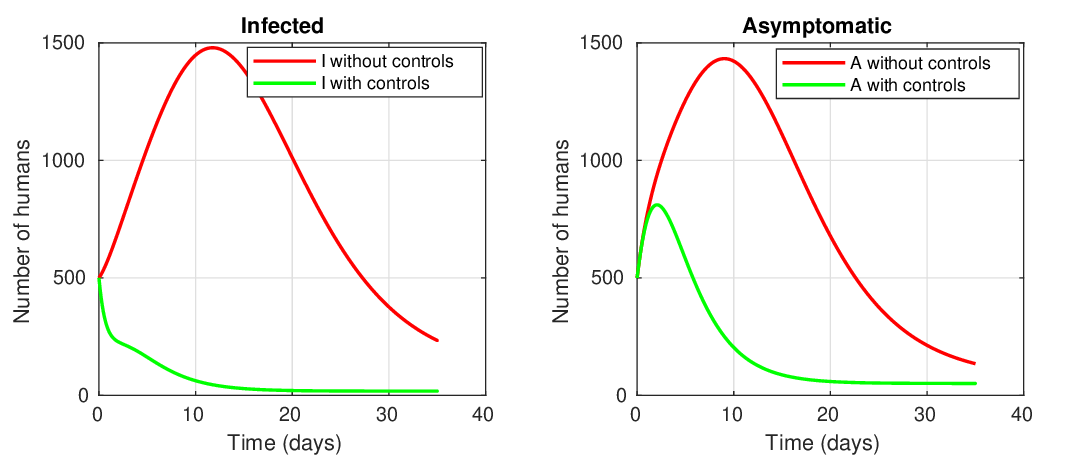} % Remplacez 'figure2' par le nom de votre deuxième image
	\caption{Changes in the Asymptomatic and Infected groups with the impulsive rate of growth.}
	\label{Fig 16}
\end{figure}

\begin{figure}[ht]
	\centering
	\includegraphics[scale=0.86]{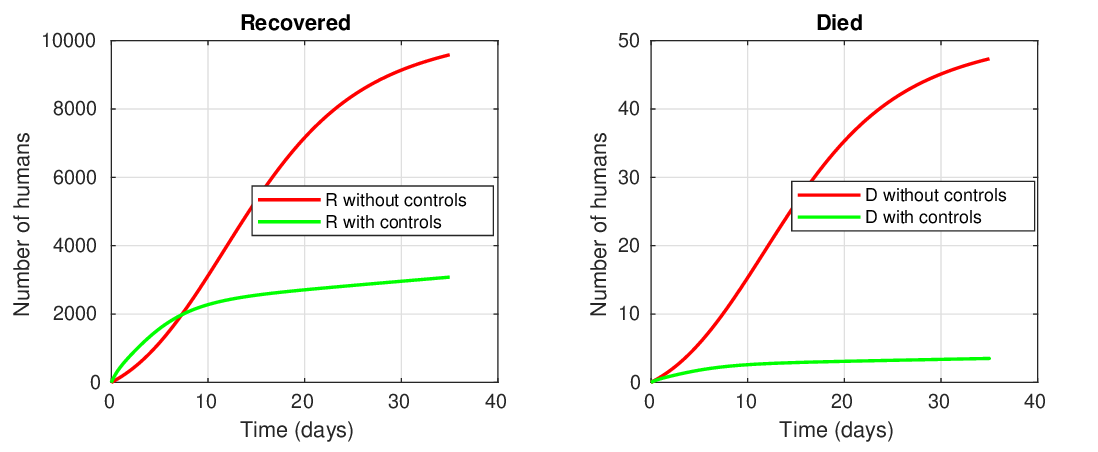} % Remplacez 'figure2' par le nom de votre deuxième image
	\caption{Changes in the Recovered and Death groups with the impulsive rate of growth.}
	\label{Fig 18}
\end{figure}

\begin{figure}[ht]
	\centering
	\includegraphics[scale=0.86]{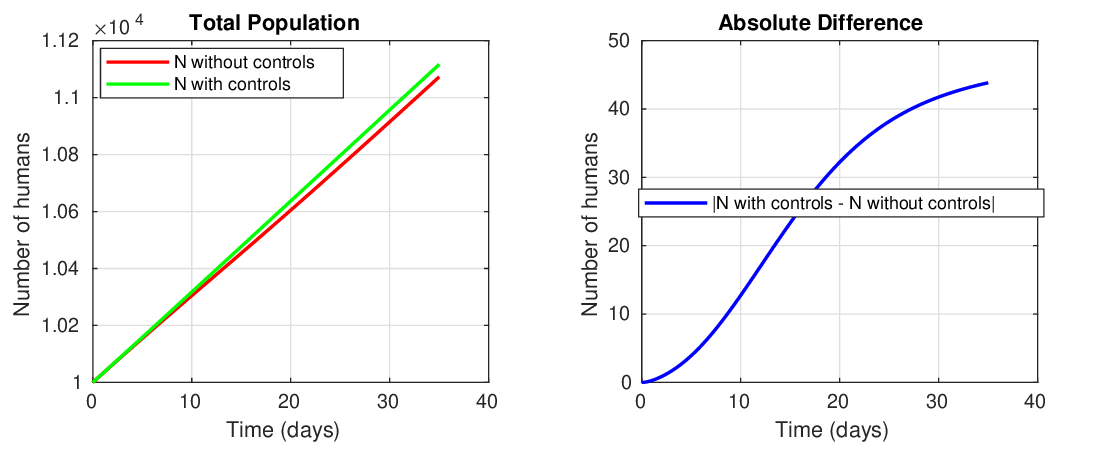} % Remplacez 'figure2' par le nom de votre deuxième image
	\caption{Changes in the total population with the impulsive rate of growth.}
	\label{Fig 20}
\end{figure}

\begin{figure}[ht]
	\centering
	\includegraphics[scale=0.86]{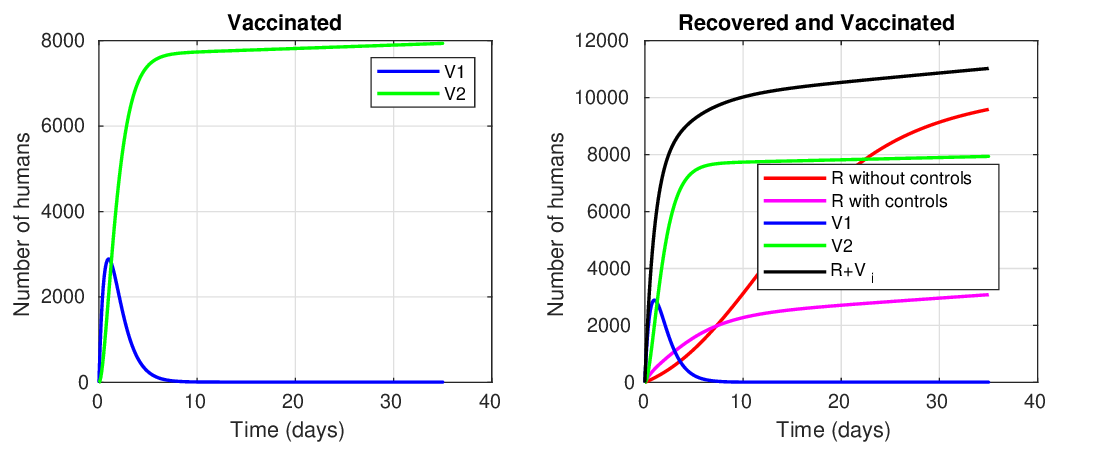} % Remplacez 'figure2' par le nom de votre deuxième image
	\caption{Comparison between recovered and vaccinated population and a total of them with and without controls.}
	\label{Fig 22}
\end{figure}

\section{Comparison with other diseases} \label{section 8}
\noindent

This section is divided into three subsections. In the first subsection, we discuss the Ebola disease with and without controls. We follow the same procedure for the Influenza (H2N2) disease. Lastly, we compare the three diseases (COVID-19, Ebola, and Influenza). The parameters considered in this comparison are presented in the following table. For COVID-19, the parameters are taken from Li et al. \cite{Li} and Riou et al. \cite{Riou}. For Ebola disease, the parameters are taken from Althaus et al. \cite{Althaus}. For Influenza disease, the parameters are taken from Arino et al. \cite{Arino}.
\begin{table}[h!]
	\centering
	\label{table2_Num}
	\begin{tabular}{cccccc} 
	\hline
	Parameter & Value (COVID-19) & Value (Ebola) & Value (Influenza)\\
	\hline
	$z$ & 0.1 & 0.76 & 0.667 \\ 
	$\eta$ & 0.3 & 0.178 & 0.244 \\ 
	$k$ & 0.54 & 0.0023 & 0.526 \\
	$\alpha$ & 0.995 & 0.26 & 0.98 \\
	$p$ & 0.02 & 0.02 & 0.9 \\
	$q$ & 0.5 & 0.5 & 0.5 \\ 
	$f$  & 0.3 & 0.178 & 0.244 \\
	$\gamma_1$ & 1 & 1 & 1 \\
	$\mu$ & 1 & 1 & 1 \\ 
	$\varepsilon$ & 0 & 0 & 0 \\ 
	\hline
	\end{tabular}
\end{table}

%\subsection{With Impulsive}
\subsection{Ebola disease.}
\noindent

For Ebola disease, as shown in figures \ref{Fig 26}  and \ref{Fig 26-d}, the application of controls reduces the number of susceptible individuals to zero within only five days, which is not the case when controls are absent, as their number will never reach zero or may take more time to do so. Regarding the exposed population, we can observe a significant increase in their number when controllers are absent, whereas when controls are applied, there is a substantial decrease in their number. This reduction is appropriate because the rate $k$ is small, indicating that the exposed population slowly moves to the groups of infected and asymptomatic individuals. Additionally, since the population is dynamic, new individuals are continuously being added (through migration or travel), preventing certain groups from declining rapidly. However, they eventually reach zero at some point, as shown in remark \ref{rem 4}. Concerning the infected population, it is evident that when controls are applied, their number decreases and eventually reaches zero. Conversely, when controls are absent, their number never reaches zero due to the sudden population increase. For the recovered population, a substantial difference is observed between the number of individuals who recover from the virus when controls are applied and when they are absent. With controls, we can recover about 1300 individuals from the virus, whereas without controls, we recover only about 400 individuals. In the absence of controls, approximately 1050 persons die due to the infection. Implementing controls allows us to save more than 860 persons' lives. As a consequence, the Ebola virus is severe and has a high case fatality rate. However, by immunizing those susceptible to the infection and treating those already infected, we can effectively eliminate the disease. 
\subsection{Influenza disease.}
\noindent

For Influenza disease, figures \ref{Fig 27} and \ref{Fig 27-d} demonstrate the effectiveness of the controller in managing the susceptible population by transferring them to the group of vaccinated individuals, with only a few moving to the group of exposed individuals. Furthermore, in the presence of controls, the number of exposed and infected individuals continuously decreases until reaching zero from the first day. However, the proportion of asymptomatic individuals with the controller has also decreased to its lowest point, yet it will take more time to reach zero compared to other groups due to the absence of a direct controller. Moreover, when controls are present, the number of recovered individuals is lower compared to when no controller is used. This is because in the absence of a controller, there are more infected individuals. The number of vaccinated individuals reaches about 8000 persons, while the number of deaths remains stable at 7 persons, accounting for the additional population. As a consequence, immunizing more of the population against Influenza disease enables us to save more lives.

\subsection{Comparison between COVID-19, Ebola, and Influenza.}
\noindent

Figures \ref{Fig 28} and \ref{Fig 28-d} show that in the presence of controls, population susceptible to infection goes to zero in around five days. For exposed population in Ebola disease, their number increase before starting to decrease to zero because of the rate $k$. Unlike, for COVID-19 and Influenza, the number of exposed population decrease to zero quickly (in around $15$ days). For asymptomatic population, zeroing their number may take more time since there is no direct controls in this groups. It is the same for infected population even there is a direct control in this group, but this is not enough the fact that others population relocate this group away from asymptomatic and exposed groups. For recovered population, their number is smaller for Ebola disease than that for COVID-19 and influenza. This is acceptable because of the fatality rate of Ebola. Then, a few of them will recovered from of the virus when the rest of them will die because of the infection, even in the presence of controls.
\section{Conclusion } \label{section 7}
\noindent

In this paper, we have considered a SEIAR dynamic epidemic model adding new groups called vaccinated population ($V_1$, $\ldots$, $V_n$), denoted as (VS-EIAR). The aim was to control the propagation of the infection by decreasing the number of susceptible, exposed, asymptomatic, and infected populations using optimal control theory. Our strategy was based on two control actions: the first one involved vaccinating the population susceptible to the infection, and the second one involved providing treatment to those who are infected. The Pontryagin's Maximum Principle (PMP) was used to establish the optimal controls and the finite optimal time. To enhance the study's relevance, we considered an impulsive dynamic pandemic model that accounts for immigration and travel. The impulsive VS-EIAR epidemic model can also be controlled using our control strategy. The theoretical analysis was complemented by numerical simulations to test the viability of our method for controlling the spread of COVID-19. Additionally, a comparison is provided to compare between three different diseases (COVID-19, Ebola, and Influenza) under both impulsive and non-impulsive scenarios.

One area for future work is to extend our model to account for populations vaccinated with different types of vaccines. Incorporating various vaccination strategies and their respective efficacy rates could provide a more comprehensive understanding of disease control. Furthermore, the accuracy of the VS-EIAR model could be impacted by its failure to account for vaccinated individuals who can still be susceptible, infected, exposed, or asymptomatic, as vaccination does not provide absolute immunity. Addressing this limitation in future iterations of the model would improve its robustness and predictive power.

%======================================================================================================================
\begin{figure}[ht]
	\centering
	\includegraphics[scale=0.86]{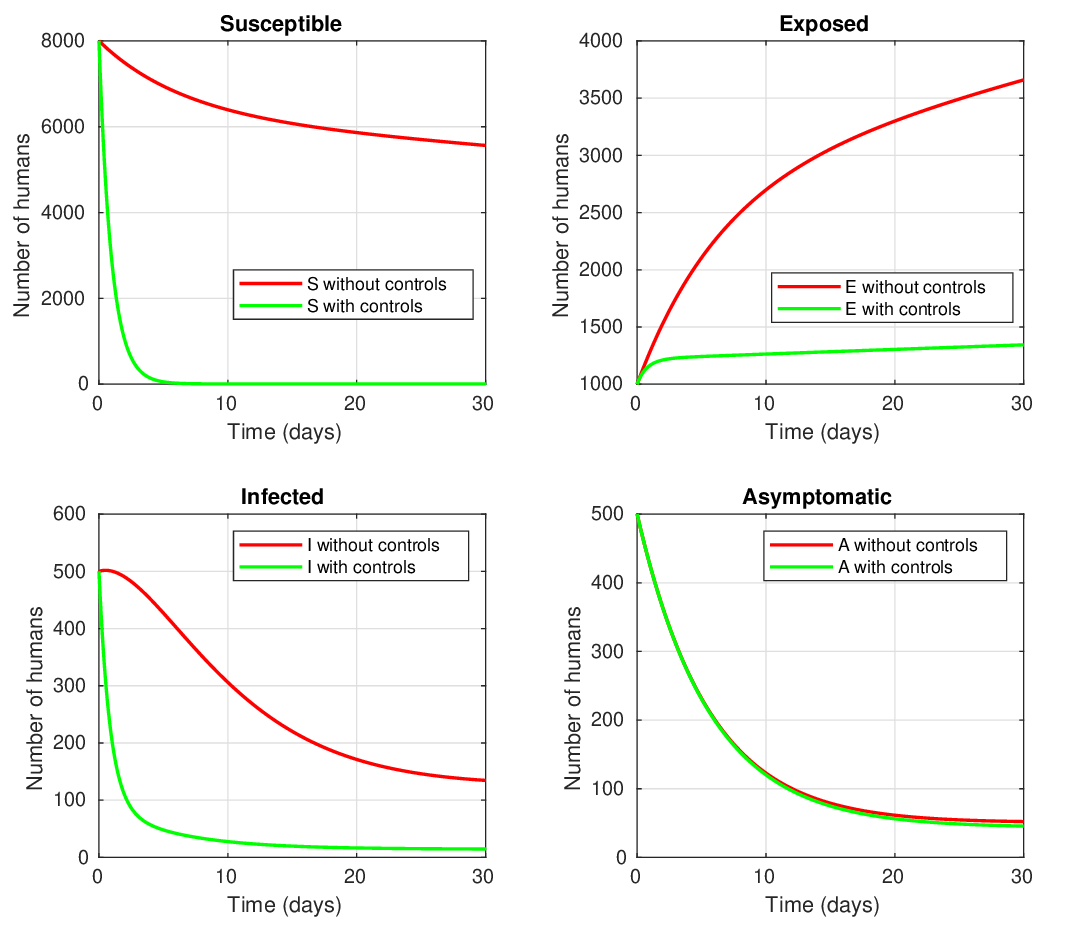}
	\caption{Changes of ($S(t)$, $E(t)$, $A(t)$, $I(t)$) with the impulsive rate of growth in Ebola.}\label{Fig 26}
\end{figure}

\begin{figure}[ht]
	\centering
	\includegraphics[scale=0.86]{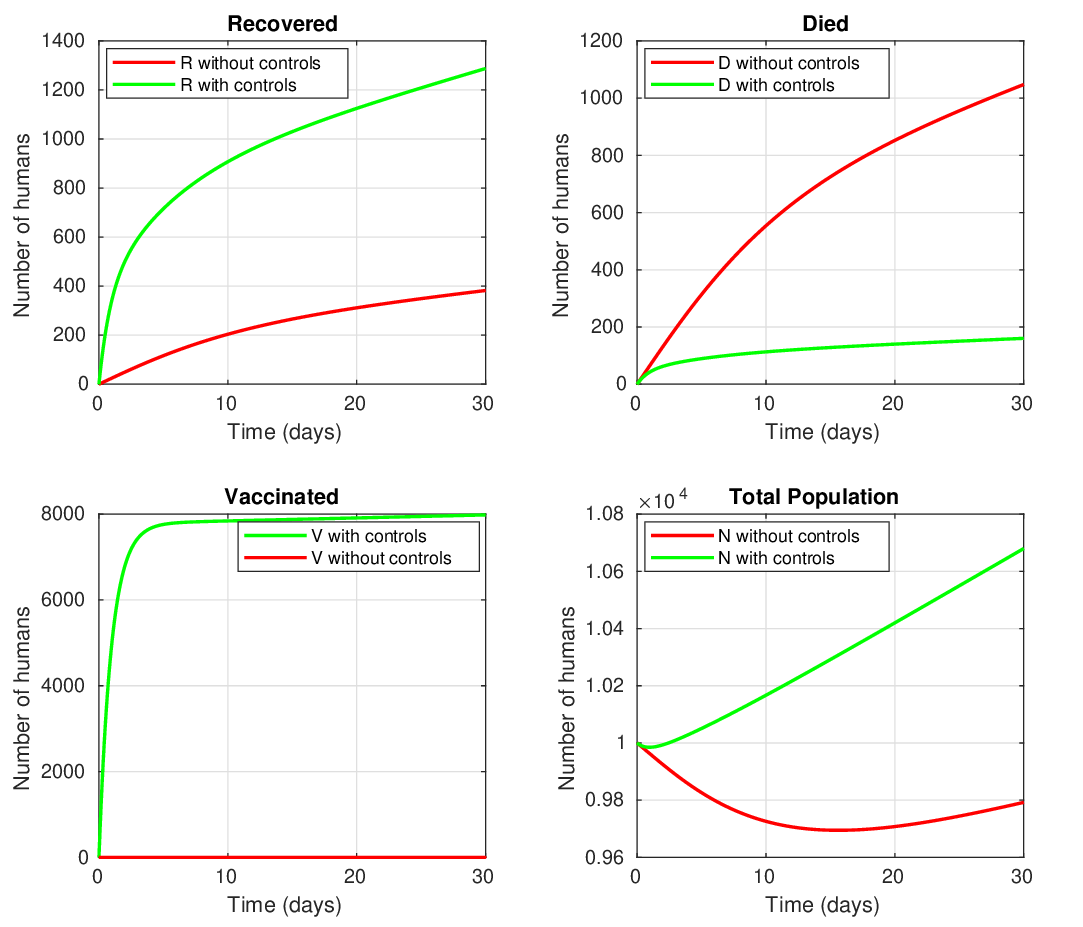}
	\caption{Changes of ($R(t)$, $V(t)$ $D(t)$, $N(t)$) with the impulsive rate of growth in Ebola.}\label{Fig 26-d}
\end{figure}

\begin{figure}[ht]
	\centering
	\includegraphics[scale=0.86]{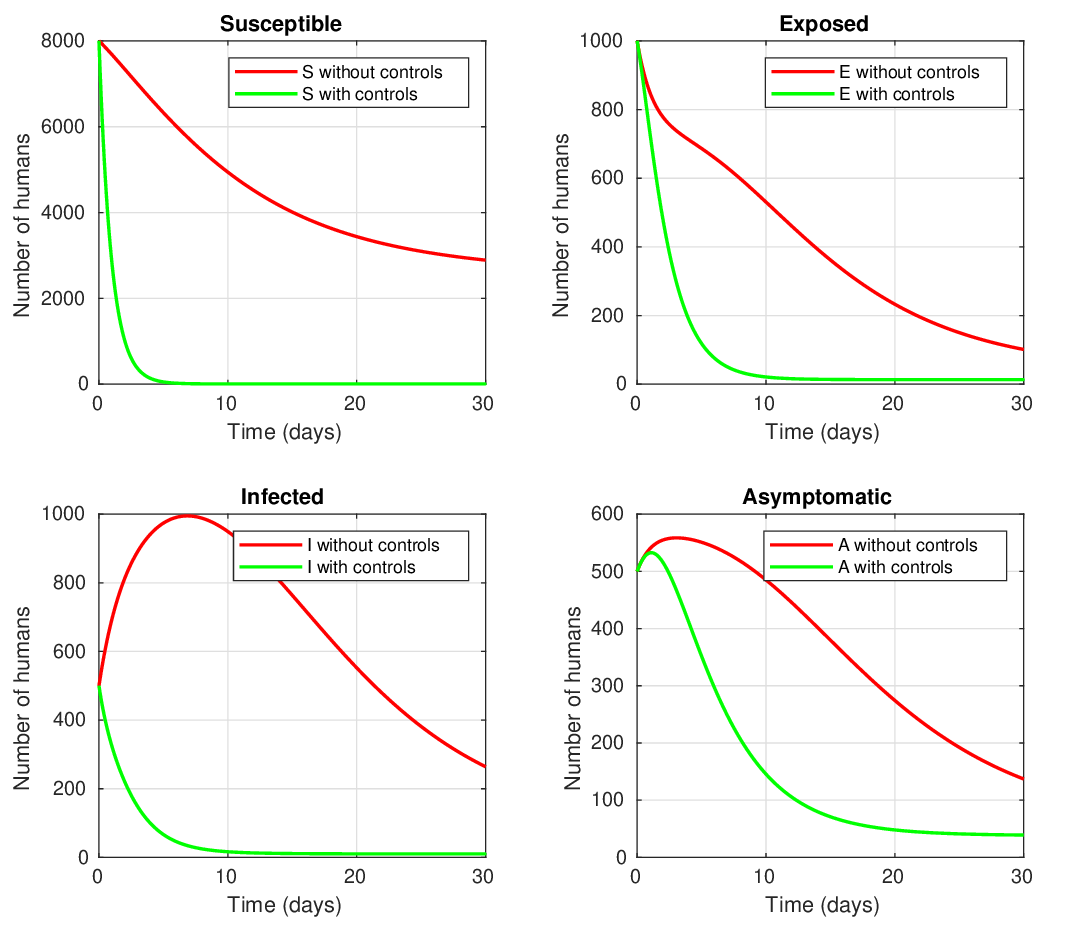}
	\caption{Changes of ($S(t)$, $E(t)$, $A(t)$, $I(t)$) with the impulsive rate of growth in Influenza.}\label{Fig 27}
\end{figure}

\begin{figure}[ht]
	\centering
	\includegraphics[scale=0.86]{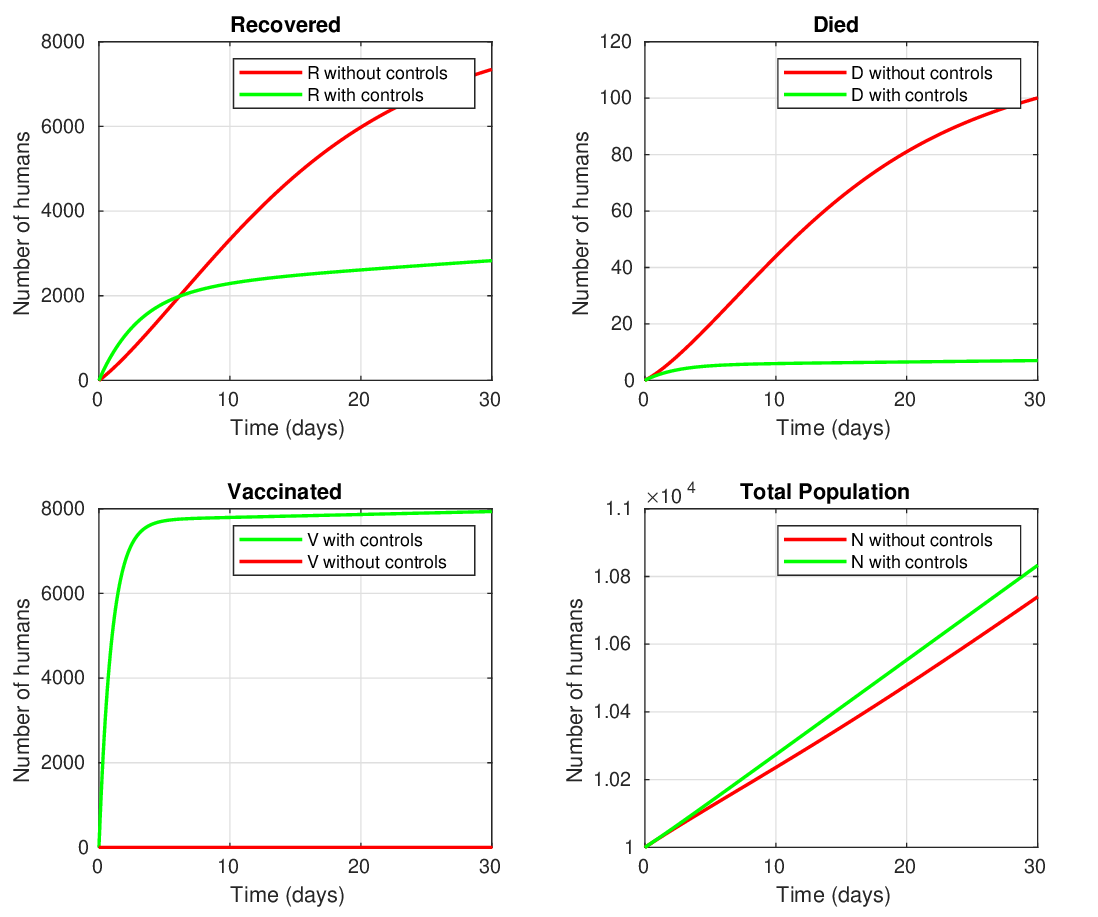}
	\caption{Changes of ($R(t)$, $V(t)$ $D(t)$, $N(t)$) with the impulsive rate of growth in Influenza.}\label{Fig 27-d}
\end{figure}

\begin{figure}[ht]
	\centering
	\includegraphics[scale=0.86]{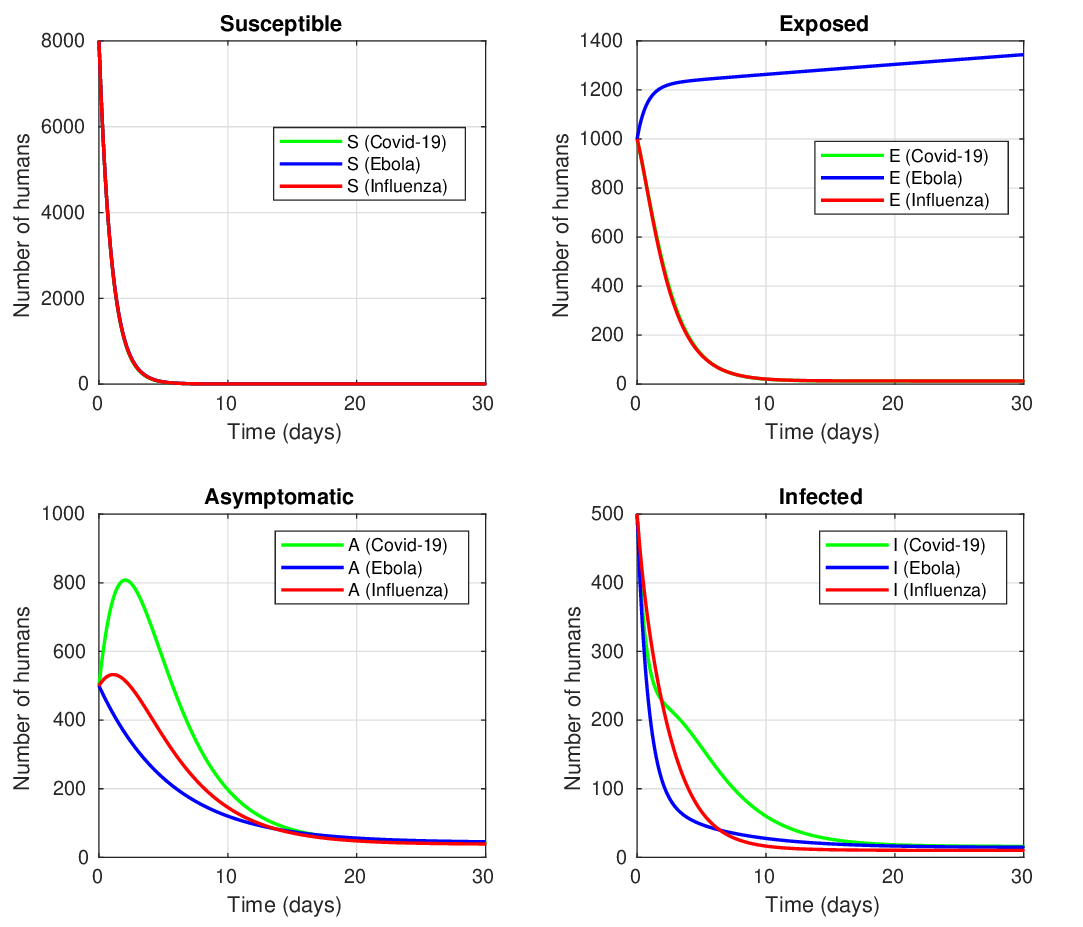}
	\caption{Comparison in ($S(t)$, $E(t)$, $A(t)$, $I(t)$) with impulsive for COVID-19, Ebola, and Influenza.}\label{Fig 28}
\end{figure}

\begin{figure}[ht]
	\centering
	\includegraphics[scale=0.86]{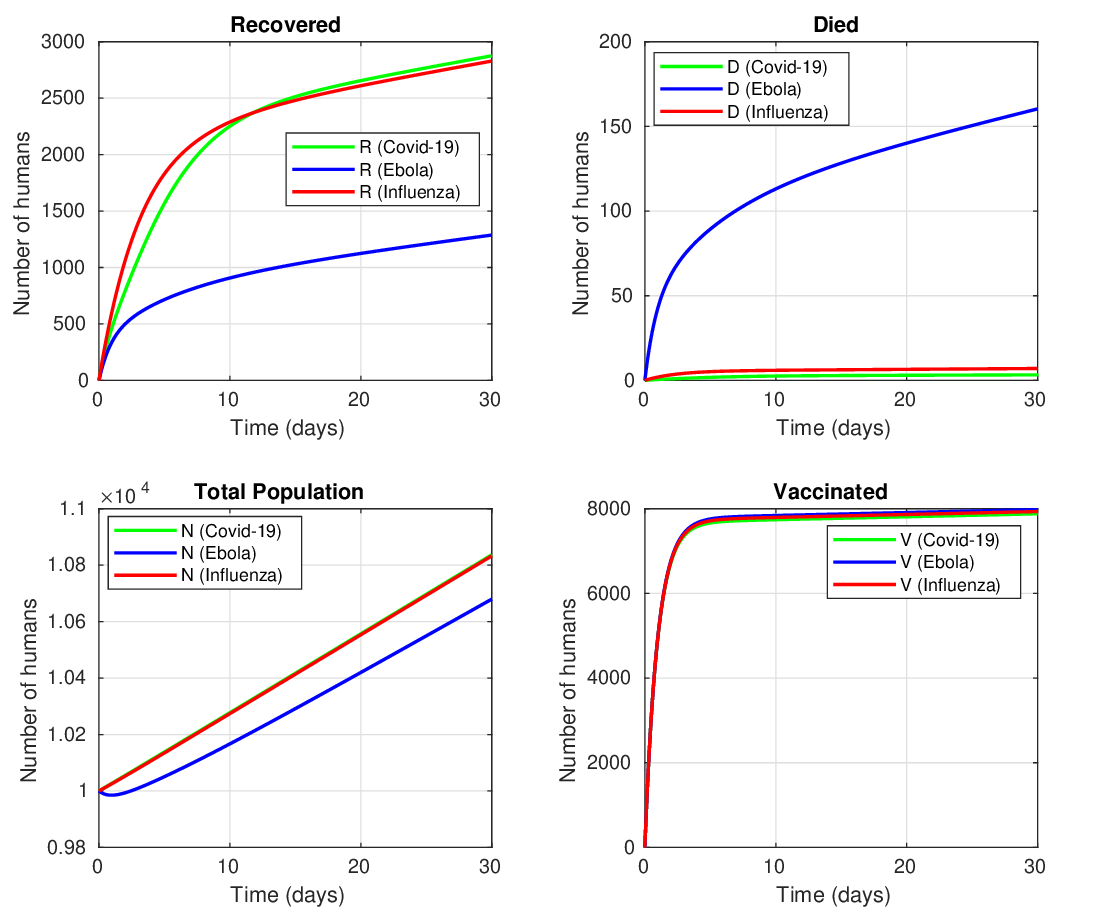}
	\caption{Comparison in ($R(t)$, $V(t)$ $D(t)$, $N(t)$) with impulsive for COVID-19, Ebola, and Influenza.}\label{Fig 28-d}
\end{figure}

%\appendix
\section{Appendix} \label{the Appendix}
\begin{proof}[\textbf{Proof of Theorem \ref{theorem 2.1}.}] Let $x(t)=\left(S(t),E(t),A(t),I(t),R(t),D(t), V_1(t),\ldots,V_n(t) \right)^{T}$. Equation (\ref{eq 1}) can be written as follows:
	\begin{equation}
	\left\{\begin{array}{l}
	\dot{x}(t)=F(t,x(t)), \quad t\geq 0\\
	x(0)=x_0,
	\end{array}\right.
	\label{eq 9.1}
	\end{equation}
	where $F(t,x(t))\equiv F(x(t),w(t))$ for $w(t)=(v(t),u(t))^{T}$ in which $F(t,\cdot)=\left(F_1(t,\cdot),\ldots,F_{n+6}(t,\cdot)\right)$ is defined on $\mathbb{R}^{n+6}$ by 
	
	\begin{equation*}
	\left.\begin{array}{l}
	F_1(t,y)=-(\beta[\varepsilon y_2+(1-q)y_4+\mu y_3]+\gamma_1v(t))y_1,\\[10pt]
	F_2(t,y) =\beta[\varepsilon y_2+(1-q)y_4+\mu y_3]y_1-ky_2+\sum_{i=1}^{n-1}\delta_iy_i,\\[10pt]
	F_3(t,y)=(1-z)ky_2-\eta y_3,\\[10pt]
	F_4(t,y)=zky_2+(1-p)\eta y_3-fy_4-u(t)y_4,\\[10pt]
	F_5(t,y) =\alpha fy_4+u(t)y_4+p\eta y_3,\\[10pt]
	F_6(t,y)=(1-\alpha)f y_4,\\[12pt]
	F_7(t,y)=\gamma_1 v(t)y_1-\gamma_{2}v(t)y_7-\delta_{1} y_7\\[10pt]
	F_{i+6}(t,y)=\gamma_iv(t)y_{i+5}-\gamma_{i+1}v(t)y_{i+6}-\delta_iy_{i+6}, \quad \text{for } i=2,3,\ldots,n-1,\\[10pt]
	F_{n+6}(t,y)=\gamma_nv(t)y_{n+5},
	\end{array}\right.
	\end{equation*}
	for $t\geq 0$ and $y=(y_1,\ldots,y_{n+6})\in\mathbb{R}^{n+6}_{+}$. To show that equation \eqref{eq 9.1} have a unique solution defined on an interval $[0,t_{max})$ for $t_{max}\leq +\infty$, it is sufficient to show that equation:
	\begin{equation}
	x(t)=x_0+\displaystyle\int_{0}^{t}F(s,x(s))ds \hspace{0.1cm} \text{ for } \hspace{0.1cm} t\geq 0,
	\label{eq int 9.2}
	\end{equation}
	has a unique solution $x(\cdot)$ defined on $[0,t_{max})$. In fact, the function $t\to F(t,Y)$ is almost everywhere continuous on $\mathbb{R}^{+}$ for each $Y\in \mathbb{R}^{n+6}_{+}$. Since $Y\to F(t,Y)$ is $\mathcal{C}^{1}(\mathbb{R}^{n+6})$, it follows that $F(t,\cdot)$ is locally Lipschitz with respect to the second  argument. As a consequence, there exists a unique function $x(\cdot)$ defined on an interval $[0,t_{max})$ for $t_{max}\leq +\infty$ that satisfy equation \eqref{eq int 9.2}.
	
	\poubelle{
	In the next, we demonstrate the positivity of solutions. For $S(t)$, it follows from equation \eqref{eq 1} that 
	\begin{equation}
	S(t) = \exp\left(-\int_{0}^{t}(\beta\Lambda(s) + \gamma_{1}v(s)) ds\right) S(0) \quad \text{ for } \quad t \in [0, t_{\text{max}}).
	\end{equation}
	Since $S(0) \geq 0$, we have $S(t) \geq 0$ for $t \in [0, t_{\text{max}})$. 
	
	For $V_1(t)$, it follows from equation \eqref{eq 1} that
	\begin{equation*}
	V_1(t) = R_1(t,0) V_1(0) + \gamma_{1}\int_{0}^{t} R_1(t,s) v(s) S(s) ds \quad \text{ for } \quad t \in [0, t_{\text{max}}),
	\end{equation*}
	where 
	\begin{equation*}
	R_1(t,s) = \exp\left(-\int_{s}^{t}(\delta_{1} + \gamma_{2}v(s)) ds\right) \quad \text{ for } \quad 0 \leq s \leq t < t_{\text{max}}.
	\end{equation*}
	Since $S(t) \geq 0$, $v(t) \geq 0$, and $V_1(0) \geq 0$, we conclude that $V_1(t) \geq 0$ for $t \in [0, t_{\text{max}})$.
	
	For $V_i(t)$ with $i=2,\ldots,n-1$, we have 
	\begin{equation*}
	V_i(t) = R_i(t,0) V_i(0) + \gamma_{i}\int_{0}^{t} R_i(t,s) v(s) V_{i-1}(s) ds \quad \text{ for } \quad t \in [0, t_{\text{max}}),
	\end{equation*}
	where 
	\begin{equation*}
	R_i(t,s) = \exp\left(-\int_{s}^{t}(\delta_{i} + \gamma_{i+1}v(s)) ds\right) \quad \text{ for } \quad 0 \leq s \leq t < t_{\text{max}}.
	\end{equation*}
	Since $V_i(0) \geq 0$ and $V_{i-1}(t) \geq 0$ for $i=2,\ldots,n-1$, we have $v_i(t) \geq 0$ for $i=2,\ldots,n-1$.
	
	For $E(t)$, $A(t)$, and $I(t)$, we can consider the following system:
	\begin{equation}
	\dot{X}(t) = BX(t) + G\left(X(t)\right), \quad t\geq 0
	\label{eq:9.3}
	\end{equation}
	for $X(t)=(E(t),A(t),I(t))^{T}$,
	where
	\begin{equation*}
	B = \begin{pmatrix}
	-k & 0 & 0\\
	(1-z)k & -\eta & 0\\
	zk & (1-z)\eta & -f
	\end{pmatrix}
	\end{equation*}
	and
	\begin{equation*}
	G\left(X(t)\right) = \begin{pmatrix}
	\beta\Lambda(t)S(t) + \sum_{i=1}^{n-1} \delta_{i} V_i(t)\\
	0\\
	-u(t)I(t)
	\end{pmatrix}
	\end{equation*}
	for $t \in [0, t_{\text{max}})$.
	
	For $\lambda > 0$, we can rewrite equation \eqref{eq:9.3} as:
	\begin{equation}
	\dot{X}(t) = (B - \lambda I_3) X(t) + G_\lambda(X(t)) \quad \text{ for } \quad t \geq 0,
	\label{eq:9.4}
	\end{equation}
	where $G_\lambda(X(t)) = G(X(t)) + \lambda X(t)$. Since $0 \leq u(t) \leq 1$, for $\lambda > 0$ sufficiently large, we have $G_\lambda(w) \geq 0$ for $w \geq 0$. From \eqref{eq:9.4}, we obtain:
	\begin{equation*}
	X(t) = \exp(t(B-\lambda I_3)) X(0) + \int_{0}^{t} \exp((t-s)(B-\lambda I_3)) G_\lambda(X(s)) ds \quad \text{ for } \quad t \in [0, t_{\text{max}}).
	\end{equation*}
	Since $X(0) \geq 0$, it follows that $X(t) \geq 0$ for $t \in [0, t_{\text{max}})$.
	
	For $R(t)$, we have 
	\begin{equation*}
	R(t) = R(0) + \int_{0}^{t} [\alpha f I(s) + u(s) I(s) + p \eta A(s)] ds \quad \text{ for } \quad t \in [0, t_{\text{max}}).
	\end{equation*}
	Since $R(0) \geq 0$, $I(t) \geq 0$, and $A(t) \geq 0$, we conclude that $R(t) \geq 0$ for $t \in [0, t_{\text{max}})$.
	
	For $D(t)$, we have 
	\begin{equation*}
	D(t) = D(0) + (1-\alpha) f \int_{0}^{t} I(s) ds \quad \text{ for } \quad t \in [0, t_{\text{max}}).
	\end{equation*}
	Since $D(0) \geq 0$ and $I(t) \geq 0$ for $t \in [0, t_{\text{max}})$, we obtain $D(t) \geq 0$ for $t \in [0, t_{\text{max}})$.

	\poubelle{
	For $g\in\{S,E,A,I,R,\\D,V_1,\ldots,V_n\}$, we denote by $\Xi(g)$ the set of points $t^{*}\in [0,t_{max})$ such that $g(t^{*})=0$. Let $g^{+}(t)=\max(0,g(t))$ and $g^{-}(t)=\max(0,-g(t))$ for $t\in[0,t_{max})$. Then, for each $t\in[0,t_{max}) \setminus\Xi(g)$, the derivatives $\frac{d g^{+}(t)}{dt}$ and $\frac{d g^{-}(t)}{dt}$ exist, $g^{+}(t)g^{-}(t)=0$, and $\frac{d g^{+}(t)}{dt}g^{-}(t)=0$. Considering that 
	\begin{center}
		$\dot{S}(t)=-(\beta\Lambda(t)+\gamma_1v(t))S(t) $ \text{ for } $t\in[0,t_{max})\setminus\Xi(S)$,
	\end{center}
	it follows that 
	\begin{eqnarray*}
		\dot{S}(t)S^{-}(t)&=&-(\beta\Lambda(t)+\gamma_1v(t))S(t)S^{-}(t)\\
		&=& -(\beta\Lambda(t)+\gamma_1v(t))(S^{+}(t)-S^{-}(t))S^{-}(t)\\
		&=& (\beta\Lambda(t)+\gamma_1v(t))(S^{-}(t))^{2}.
	\end{eqnarray*}
	Moreover, 
	\begin{center}
		$\dot{S}(t)S^{-}(t)=\dfrac{d(S^{+}(t)-S^{-}(t))}{dt}S^{-}(t)=-\dfrac{dS^{-}(t)}{dt}S^{-}(t)=-\dfrac{1}{2}\dfrac{d(S^{-}(t))^{2}}{dt}$,
	\end{center}  
	\text{ for } $t\in[0,t_{max})\setminus\Xi(S)$. Hence, 
	\begin{center}
		$\dfrac{d(S^{-}(t))^{2}}{dt}=-2(\beta\Lambda(t)+\gamma_1v(t))(S^{-}(t))^{2}$ \text{ for } $t\in[0,t_{max})\setminus\Xi(S)$.
	\end{center}
	Thus, 
	\begin{center}
		$(S^{-}(t))^{2}=\exp\left( -2\displaystyle\int_{t^{*}}^{t}(\beta\Lambda(s)+\gamma_1v(s))ds\right)(S^{-}(t^{*}))^{2}$,
	\end{center}
	\text{ for } $t_{max}> t\geq t^{*}$, $t^{*}\in \Xi(S)\cup\{0\}$. Since $S(t^{*})\geq 0$ (because, $S(t^{*}=0)\geq 0$ and $S(t^{*})=0$ for $t^{*}\in\Xi(S)$), it follows that $S^{-}(t^{*})=0$. As a consequence, $S^{-}(t)=0$ \text{ for } each $t\in[0,t_{max})$.
	
	Considering that 
	\begin{center}
		$\dot{V}_1(t)= \gamma_1v(t)S(t)-\gamma_2v(t)V_1(t)-\delta_1V_1(t)$ \text{ for } $t\in[0,t_{max})\setminus\Xi(V_1)$,
	\end{center}
	it follows that
	\begin{eqnarray*}
		\dot{V}_1(t)V_1^{-}(t)&=&\left( \gamma_1v(t)S(t)-\gamma_2v(t)V_1(t)-\delta_1V_1(t)\right) V_1^{-}(t)\\
		&=& \gamma_1v(t)S(t)V_1^{-}(t)-(\gamma_2v(t)+\delta_1)V_1(t)V_1^{-}(t)\\
		&=& \gamma_1v(t)S(t)V_1^{-}(t)+(\gamma_2v(t)+\delta_1)(V_1^{-}(t))^{2}\\
		&\geq & 0.
	\end{eqnarray*}
	Moreover, 
	\begin{center}
		$\dot{V}_1(t)V_1^{-}(t)=\dfrac{d(V_1^{+}(t)-V_1^{-}(t))}{dt}V_1^{-}(t)=-\dfrac{1}{2}\dfrac{d(V_1^{-}(t))^{2}}{dt}$,
	\end{center}
	\text{ for } $t\in[0,t_{max})\setminus\Xi(V_1)$. Hence, 
	\begin{center}
		$\dfrac{d(V_1^{-}(t))^{2}}{dt}\leq 0$ \text{ for } $t\in[0,t_{max})\setminus\Xi(V_1)$.
	\end{center}
	By integration, we obtain that 
	\begin{center}
		$(V_1^{-}(t))^{2}\leq (V_1^{-}(t^{*}))^{2}$ \text{ for }  $t_{max}> t\geq t^{*}$, $t^{*}\in \Xi(V_1)\cup\{0\}$.
	\end{center}
	Since $V_1(t^{*})\geq 0$ (because, $V_1(t^{*}=0)\geq 0$ and $V_1(t^{*})=0$ for $t^{*}\in\Xi(V_1)$), it follows that $V_1^{-}(t^{*})=0$. As a consequence, $V_1^{-}(t)=0$ for each $t\in[0,t_{max})$.

	Considering that
	\begin{center}
		$\dot{V}_{i}(t)=\gamma_{i}v(t)V_{i-1}(t)-\gamma_{i+1}v(t)V_{i}(t)-\delta_{i}V_{i}(t)$ \text{ for } $t\in[0,t_{max})\setminus\Xi(V_i)$,
	\end{center}
	for $i=2,\ldots, n-1$, in a similar way, we obtain that 
	\begin{eqnarray*}
		\dot{V}_{i}(t)V_i^{-}(t)&=&\left(\gamma_{i}v(t)V_{i-1}(t)-\gamma_{i+1}v(t)V_{i}(t)-\delta_{i}V_{i}(t)\right) V_i^{-}(t)\\
		&=& \gamma_{i}v(t)V_{i-1}(t)V_i^{-}(t)+ (\gamma_{i+1}v(t)+\delta_{i})(V_i^{-}(t))^{2}\\
		&\geq & 0.
	\end{eqnarray*}
	Since 	$\dot{V}_i(t)V_i^{-}(t)=-\dfrac{1}{2}\dfrac{d(V_i^{-}(t))^{2}}{dt}$ for $t\in[0,t_{max})\setminus\Xi(V_i)$, it follows that
	\begin{center}
		$\dfrac{ d (V_i^{-}(t))^{2}}{dt}\leq 0$ \text{ for } $t\in[0,t_{max})\setminus\Xi(V_i)$.
	\end{center}
	By integration, we get that 
	\begin{center}
		$(V_i^{-}(t))^{2}\leq (V_i^{-}(t^{*}))^{2} $ \text{ for }  $t_{max}> t\geq t^{*}$, $t^{*}\in \Xi(V_i)\cup\{0\}$.
	\end{center}
	Since $V_i(t^{*})\geq 0$ (because, $V_i(t^{*}=0)\geq 0$ and $V_i(t^{*})=0$ for $t^{*}\in\Xi(V_i)$), it follows $V_i^{-}(t^{*})=0$. As a consequence, $V_i^{-}(t)=0$ for each $t\in[0,t_{max})$.
	
	Considering that 
	\begin{center}
		$\dot{V}_n(t)=\gamma_nv(t)V_{n-1}(t)$ \text{ for } $t\in[0,t_{max})\setminus \Xi(V_n)$,
	\end{center}
	it follows that 
	\begin{center}
		$-\dfrac{1}{2}\dfrac{d(V_n^{-}(t))^{2}}{dt}=\gamma_nv(t)V_{n-1}(t)V_n^{-}(t)\geq 0$ \text{ for } $t\in[0,t_{max})\setminus \Xi(V_n)$.
	\end{center}
	Hence,
	\begin{center}
		$\dfrac{d(V_n^{-}(t))^{2}}{dt}\leq 0$ \text{ for } $t\in[0,t_{max})\setminus \Xi(V_n)$.
	\end{center}
	By integration, we obtain that 
	\begin{center}
		$(V_n^{-}(t))^{2}\leq (V_n^{-}(t^{*}))^{2}$ \text{ for } $t_{max}> t\geq t^{*}$, $t^{*}\in \Xi(V_n)\cup\{0\}$.
	\end{center}
	Since $ V_n(t^{*})\geq 0$ (because, $V_n(t^{*}=0)\geq 0$ and $V_n(t^{*})=0$ for $t^{*}\in\Xi(V_n)$), it follows that $V_n^{-}(t^{*})=0$. As a consequence, $V_n^{-}(t)=0$ for $t\in[0,t_{max})$.
	
	In a similar way, we obtain that
	\begin{eqnarray*}
		\dot{E}(t)E^{-}(t)&=&\left(\beta\Lambda(t)S(t)-kE(t)+\sum\limits_{i=1}^{n-1}\delta_iV_i(t)\right)E^{-}(t)\\
		&=& \left(\beta[\epsilon E(t)+(1-q)I(t)+\mu A(t)]S(t)-kE(t)+\sum\limits_{i=1}^{n-1}\delta_iV_i(t)\right) E^{-}(t)\\
		&=& \beta[\epsilon E(t)+(1-q)I(t)+\mu A(t)]S(t)E^{-}(t)+ k(E^{-}(t))^{2}+ \sum\limits_{i=1}^{n-1}\delta_iV_i(t)E^{-}(t)\\
		&\geq & -\beta \epsilon S(t) (E^{-}(t))^{2}+\beta (1-q)I(t)S(t)E^{-}(t) +\beta \mu A(t)S(t)E^{-}(t).
	\end{eqnarray*}
	Then, for each $t\in[0,t_{max})\setminus \Xi(E)$, we have
	\begin{eqnarray*}
		-\dfrac{1}{2}\dfrac{d (E^{-}(t))^{2}}{dt}I^{+}(t)A^{+}(t)&\geq & -\beta \epsilon S(t) (E^{-}(t))^{2}I^{+}(t)A^{+}(t)\\
		& & +\beta (1-q)(I^{+}(t))^{2}S(t)E^{-}(t)A^{+}(t)\\
		& & +\beta \mu (A^{+}(t))^{2}S(t)E^{-}(t)I^{+}(t)\\
		&\geq & -\beta \epsilon S(t) (E^{-}(t))^{2}I^{+}(t)A^{+}(t).
	\end{eqnarray*}
	Hence, 
	\begin{eqnarray*}
		\dfrac{d (E^{-}(t))^{2}}{dt} & \leq & 2\beta \epsilon S(t) (E^{-}(t))^{2}, 
	\end{eqnarray*}
	\text{ for } $t\in[0,t_{max})\setminus \Xi(E)$. By Grönwall's inequality, we obtain that
	\begin{center}
		$(E^{-}(t))^{2}\leq (E^{-}(t^{*}))^{2}\exp\left( 2\beta \epsilon\displaystyle\int_{t^{*}}^{t} S(s)ds\right)$,
	\end{center}
	\text{ for } $t_{max}> t\geq t^{*}$, $t^{*}\in \Xi(E)\cup\{0\}$. Since $E(t^{*})\geq 0 $ (because, $E(t^{*}=0)\geq 0$ and $E(t^{*})=0$ for $t^{*}\in\Xi(E)$), it follows that $E^{-}(t^{*})=0$. As a consequence, $E^{-}(t)=0$ for each $t\in [0,t_{max})$.
	
	Considering that 
	\begin{center}
		$\dot{A}(t)=(1-z)kE(t)-\eta A(t)$ \text{ for } $t\in [0,t_{max})\setminus \Xi(A)$,
	\end{center}
	it follows that 
	\begin{center}
		$-\dfrac{1}{2}\dfrac{d (A^{-}(t))^{2}}{dt}=(1-z)kE(t)A^{-}(t)+\eta (A^{-}(t))^{2} \geq 0$,
	\end{center}
	\text{ for } $t\in [0,t_{max})\setminus \Xi(A)$. Hence, 
	\begin{center}
		$\dfrac{d (A^{-}(t))^{2}}{dt}\leq 0$ \text{ for } $t\in [0,t_{max})\setminus \Xi(A)$.
	\end{center}
	By integration, we obtain that
	\begin{center}
		$ (A^{-}(t))^{2}\leq (A^{-}(t^{*}))^{2} $ \text{ for } $t_{max}> t\geq t^{*}$, $t^{*}\in \Xi(A)\cup\{0\}$.
	\end{center}
	Since $A(t^{*})\geq 0$, it follows that $A^{-}(t^{*})=0$. As a consequence, $A^{-}(t)=0$ for each $t\in [0,t_{max})$.
	
	From \eqref{eq 1}, we obtain 
	\begin{eqnarray*}
		-\dfrac{1}{2}\dfrac{d (I^{-}(t))^{2}}{d t}& = & zkE(t)I^{-}(t)+(1-p)\eta A(t)I^{-}(t)+(f+u(t))(I^{-}(t))^{2}\\
		&\geq & 0,
	\end{eqnarray*}
	for each $t\in[0,t_{max})\setminus \Xi(I)$.
	Then, 
	\begin{center}
		$\dfrac{d(I^{-}(t))^{2}}{dt}\leq 0$ \text{ for } $t\in [0,t_{max})\setminus \Xi(I)$. 
	\end{center}
	By integration, we obtain that
	\begin{center}
		$(I^{-}(t))^{2}\leq (I^{-}(t^*))^{2}$ \text{ for } $t_{max}> t\geq t^{*}$, $t^{*}\in \Xi(I)\cup\{0\}$. 
	\end{center}
	Since $I(t^{*})\geq 0$ (because, $I(t^{*}=0)\geq 0$ and $I(t^{*})=0$ for $t^{*}\in\Xi(I)$), it follows that $I^{-}(t^{*})=0$. As a consequence, $I^{-}(t)=0$ for each $t\in [0,t_{max})$. 
	
	In a similar way, we can affirm that 
	\begin{center}
		$\dfrac{d(R^{-}(t))^{2}}{dt}\leq 0$ \text{ for } $t\in [0,t_{max})\setminus\Xi(R)$. 
	\end{center}
	and 
	\begin{center}
		$\dfrac{d(D^{-}(t))^{2}}{dt}\leq 0$ \text{ for } $t\in [0,t_{max})\setminus\Xi(D)$. 
	\end{center}
	By integration, we get that 
	\begin{center}
		$(R^{-}(t))^{2}\leq (R^{-}(t^{*}))^{2}$ \text{ for } $t_{max}> t\geq t^{*}$, $t^{*}\in \Xi(R)\cup\{0\}$,
	\end{center}
	and
	\begin{center}
		$(D^{-}(t))^{2}\leq (D^{-}(t^{*}))^{2}$ \text{ for } $t_{max}> t\geq t^{*}$, $t^{*}\in \Xi(D)\cup\{0\}$.
	\end{center}
	Since $R(t^{*})\geq 0$ and $D(t^{*})\geq 0$ (because, $R(t^{*}=0)\geq 0$, $D(t^{*}=0)\geq 0$, $R(t^{*})=0$ for $t^{*}\in\Xi(R)$ and $D(t^{*})=0$ for $t^{*}\in\Xi(D)$), it follows that $ R^{-}(t^{*})=D^{-}(t^{*})=0$. As a consequence, $R^{-}(t)=D^{-}(t)=0$ for $t\in [0,t_{max})$.}}
	
	Next, we demonstrate the positivity of the solutions. By Definition 4.1 from \cite{Haddad}, we can show that $F$ is essentially nonnegative. Following Proposition 4.1 from \cite{Haddad}, we deduce that the solutions of equation \eqref{eq 1} are positive.
	 
	Finally, we show the boundedness of solutions. Since $\dot{N}(t)=(\alpha-1)I(t)\leq 0$, it follows that $N(\cdot)$ is decreasing, which implies that $0\leq N(t)\leq N_0$ for $t\in [0,t_{max})$. Then, solutions of equation \eqref{eq 1} are bounded. As a consequence $t_{max}=+\infty$.
\end{proof}

\begin{proof}[ \textbf{Proof of Theorem \ref{thm 3.2}.}]
	The optimal controls can be calculated by 
	\begin{center}
		$\nabla_{u(t)}^{H}=\dfrac{\partial H}{\partial u(t)}=0$ \quad and \quad $\nabla_{v(t)}^{H}=\dfrac{\partial H}{\partial v(t)}=0$.
	\end{center}
	Therefore,
	\begin{center}
		$u^{*}(t)=\dfrac{I^{*}(t)\left[ p_4(t)-p_5(t)\right]}{\sigma_0}$,
	\end{center}
	and
	\begin{center}
		$v^{*}(t)=\dfrac{\left[ \gamma_1S^{*}(t)[p_1(t)-q_1(t)]+\gamma_{2}q_1(t)V_1^*(t)+\sum\limits_{i=2}^{n-1}q_i(t)[\gamma_{i+1}V_i^*(t)-\gamma_{i}V^*_{i-1}(t)]\right]}{\left(\sum\limits_{i=1}^{n}\sigma_i\gamma_i^2\right)}$,
	\end{center}
	for $t\in[0,\tau^*]$. Since $u^{*}\in U_{ad}^{1}$ and $v^{*}\in U_{ad}^{2}$, it follows that
	\begin{equation*}
	u^{*}(t)=\max\left\{\min\left\{\dfrac{I^{*}(t)\left[ p_4(t)-p_5(t)\right]}{\sigma_0},1 \right\},0\right\},
	\end{equation*}
	and
	\begin{equation*}
	v^{*}(t)=\max\left\{\hspace{-0.1cm}\min\left\{\dfrac{W(t)}{\left(\sum\limits_{i=1}^{n}\sigma_i\gamma_i^2\right)},\dfrac{1}{\gamma_1} \hspace{-0.1cm}\right\},0\hspace{-0.1cm}\right\},
	\end{equation*}
	where  
	\begin{center}
		$W(t)=\dfrac{\left[ \gamma_1S^{*}(t)[p_1(t)-q_1(t)]+\gamma_{2}q_1(t)V_1^*(t)+\sum\limits_{i=2}^{n-1}q_i(t)[\gamma_{i+1}V_i^*(t)-\gamma_{i}V^*_{i-1}(t)]\right]}{\left(\sum\limits_{i=1}^{n}\sigma_i\gamma_i^2\right)}$,
	\end{center}
	for $t\in[0,\tau^*]$. The optimal final time $\tau^{*}$ can be calculated by
	\begin{equation*}
	H\left(X^{*}(\tau^*),v^{*}(\tau^*),u^{*}(\tau^*),P^{*}(\tau^*),Q^{*}(\tau^*),\tau^{*}\right)+\dfrac{\partial \mathcal{M}(\tau^{*})}{\partial t}=0.
	\end{equation*}
	The rest of the proof follows in \eqref{eqn 4}-\eqref{eqn 4-d}.
\end{proof}
\begin{proof}[\textbf{Proof of Theorem \ref{thm 4}.}]  Let $x(t)=\left(S(t),E(t),A(t),I(t),R(t),D(t), V_1(t),\ldots,V_n(t) \right)^{T}$, and $\rho_i(\cdot)$ $(\text{for } i=1,\ldots,4)$ be the functions defined by
	\begin{equation*}
	\rho_i(t)=	\left\{\begin{array}{l}
	\lambda_i(t_k) \text{ if } t=t^{+}_k\\
	0 \text{ if } t\neq t^{+}_k.
	\end{array}\right.
	\end{equation*}
	Equation \eqref{equ 3} can be written as 
	\begin{equation}
	\left\{\begin{array}{l}
	\dot{x}(t)=L(t,x(t)), \quad t\geq 0\\
	x(0)=x_0,
	\end{array}\right.
	\label{eq 9.2}
	\end{equation}
	where $L(t,\cdot)=\left(L_1(t,\cdot),\ldots,L_{n+6}(t,\cdot)\right)$ is defined on $\mathbb{R}^{n+6}$ by
	\begin{equation*}
	L_{i}(t,y)=\left\{\begin{array}{l}
	F_{i}(t,y)+\rho_i(t)y_i \hspace{0.1cm} \text{ for } \hspace{0.1cm} i=1,\ldots,4,\\
	F_{i}(t,y) \hspace{0.1cm} \text{ for } \hspace{0.1cm} i=5,\ldots,n+6,
	\end{array}\right.
	\end{equation*}
	for $t\geq 0$ and $y=(y_1,\ldots,y_{n+6})\in\mathbb{R}^{n+6}_{+}$. The function $t\to L(t,Y)$ is locally $\mathbb{L}^{1}-$integrable on $\mathbb{R}^{+}$ for each $Y\in \mathbb{R}^{n+6}_{+}$. Since $L(t,\cdot)$ is $\mathcal{C}^{1}(\mathbb{R}^{n+6})$, it follows that $L(t,\cdot)$ is Locally Lipschitz.  As a consequence, equation \eqref{eq 9.2} have a unique solution $x(\cdot)$ defined on an interval $[0,t_{max})$ for $t_{max}\leq +\infty$, and satisfying the following integral equation:
	\begin{equation*}
	x(t)=x_0+\displaystyle\int_{0}^{t}L(s,x(s))ds \hspace{0.1cm} \text{ for } \hspace{0.1cm} t\in [0,t_{max}).
	\end{equation*}
	
	To show the positivity of solutions, we follow the same approach as presented in the proof of Theorem \ref{theorem 2.1}. \poubelle{ Then, we obtain that $R(t)\geq 0$, $D(t)\geq 0$, and $V_i(t)\geq 0$ for $i=1,\ldots,n$.
	
	Now, considering that
	\begin{center}
		$\dot{S}(t)=-(\beta\Lambda(t)+\gamma_1v(t))S(t)+\rho_1(t)S(t)$,
	\end{center}
	\text{ for } $t\in[0,t_{max})\setminus \Xi(S)$,
	it follows that
	\begin{center}
		$\dfrac{d (S^{-}(t))^{2}}{d t}= -(\beta\Lambda(t)+\gamma_1v(t)-\rho_1(t))(S^{-}(t))^{2} $,
	\end{center}
	\text{ for } $t\in[0,t_{max})\setminus\Xi(S)$. Hence,
	\begin{center}
		$(S^{-}(t))^{2}=\exp\left(-\displaystyle\int_{t^{*}}^{t}(\beta\Lambda(s)+\gamma_1v(s)-\rho_1(s))ds\right)(S^{-}(t^{*}))^{2}$,
	\end{center}
	\text{ for } $t_{\max}>t\geq t^{*}$, $t^{*}\in \Xi(S)\cup\{0\}$. Since $S(t^{*})\geq 0$, it follows that $ S^{-}(t^{*})=0$. Thus, $S^{-}(t)=0$ for each $t\in[0,t_{max})$.
	
	From \eqref{equ 3}, we obtain that
	\begin{eqnarray*}
		-\dfrac{1}{2}\dfrac{d (E^{-}(t))^{2}}{d t}
		&\geq & \beta\Lambda(t)S(t)E^{-}(t)-\rho_2(t)(E^{-}(t))^{2} \text{ for } t\in[0,t_{max})\setminus\Xi(E).
	\end{eqnarray*}
	Hence, 
	\begin{eqnarray*}
		-\dfrac{1}{2}\dfrac{d (E^{-}(t))^{2}}{dt}I^{+}(t)A^{+}(t) 
		&\geq & -(\beta \epsilon S(t)+\rho_2(t)) (E^{-}(t))^{2}I^{+}(t)A^{+}(t),
	\end{eqnarray*}
	for $t\in[0,t_{max})\setminus\Xi(E)$. Thus, 
	\begin{eqnarray*}
		\dfrac{d (E^{-}(t))^{2}}{dt} &\leq& 2(\beta \epsilon S(t)+\rho_2(t)) (E^{-}(t))^{2}\\
		&\leq & 2(\beta \epsilon S(t)+1) (E^{-}(t))^{2},
	\end{eqnarray*}
	\text{ for } $t\in[0,t_{max})\setminus\Xi(E)$.
	By Gronwall's inequality, we get that
	\begin{center}
		$  (E^{-}(t))^{2}\leq  \exp\left(\displaystyle\int_{t^{*}}^{t} (2\beta \epsilon S(s)+1)ds\right)(E^{-}(t^{*}))^{2}$,
	\end{center} 
	\text{ for } $t_{max}>t\geq t^{*}$, $t^{*}\in \Xi(E)\cup\{0\}$. Since $E(t^{*})\geq 0$, it follows that $E^{-}(t^{*})=0$. As a consequence, $E^{-}(t)=0$ for each $t\in[0,t_{max})$. 
	
	For $A(t)$, we have 
	\begin{eqnarray*}
		\dfrac{d (A^{-}(t))^{2}}{d t}\leq 2 \rho_3(t)(A^{-}(t))^{2}\leq  2 (A^{-}(t))^{2} \text{ for } t\in[0,t_{max})\setminus \Xi(A).
	\end{eqnarray*}
	By Gronwall Lemma, we obtain that
	\begin{center}
		$  (A^{-}(t))^{2} \leq  \exp({2(t-t^{*})})(A^{-}(t^{*}))^{2}$ \text{ for } $t_{max}>t\geq t^{*}$, $t^{*}\in \Xi(A)\cup\{0\}$.
	\end{center}
	Since $A(t^{*})\geq 0$, it follows that $A^{-}(t^{*})=0$. As a consequence, $A^{-}(t)=0$ for each $t\in[0,t_{max})$. 
	
	For $I(t)$, we have
	\begin{eqnarray*}
		\dfrac{d (I^{-}(t))^{2}}{d t}\leq 2 \rho_4(t)(I^{-}(t))^{2}\leq 2(I^{-}(t))^{2} \text{ for } t\in[0,t_{max})\setminus \Xi(I).
	\end{eqnarray*}
	By Gronwall Lemma, we obtain that
	\begin{center}
		$  (I^{-}(t))^{2} \leq  \exp({2(t-t^{*})})(I^{-}(t^{*}))^{2}$ \text{ for } $t_{max}>t\geq t^{*}$, $t^{*}\in \Xi(I)\cup\{0\}$.
	\end{center}
	Since $I(t^{*})\geq 0$, it follows that $I^{-}(t^{*})=0$. As a consequence, $I^{-}(t)=0$ for each $t\in[0,t_{max})$.} Finally, we show the boundedness of solutions and  $t_{max}=+\infty$. For this reason, we discuses the following cases:
	\begin{enumerate}
		\item[] \textbf{case 1:} if $t_{max}<t_1$, we have $\dot{N}(t)=(\alpha-1)I(t)\leq 0$ for $t\in[0,t_{max})$, then $N(t)\leq N_0$ for $t\in[0,t_{max})$.
		\item[] \textbf{case 2:} if $t_{max}\in[t_1,t_2[$, we have $\dot{N}(t)=(\alpha-1)I(t)\leq 0$ for $t\in[0,t_1[$, then $N(t)\leq N_0$ for $t\in[0,t_1[$. If $t=t_1$, we have $N(t_1)=N(t_1^{-})+N(t_1^{+})-N(t_1^{-})\leq N_0+\left[N(t_1^{+})-N(t_1^{-})\right]$. Then, $N(t)\leq N_0+\left[N(t_1^{+})-N(t_1^{-})\right]$ for $t\in[0,t_1]$. If  $t\in]t_1,t_{max})$, we have  $\dot{N}(t)=(\alpha-1)I(t)\leq 0$, then $N(t)\leq N(t_1)\leq N_0+\left[N(t_1^{+})-N(t_1^{-})\right]$ for $t\in[t_1,t_{max})$. As a consequence, in that case, $N(t)\leq N_0+\left[N(t_1^{+})-N(t_1^{-})\right]$ for $t\in[0,t_{max})$.
		\item[] \textbf{case 3:} without loss of generality, we can assume that $t_{max}>t_p$. For $t\in]t_1,t_2[$, we have $\dot{N}(t)=(\alpha-1)I(t)\leq 0$, then $N(t)\leq N(t_1)\leq N_0+\left[N(t_1^{+})-N(t_1^{-})\right]$. If $t=t_2$, we have $N(t_2)=N(t_2^{-})+N(t_2^{+})-N(t_2^{-})\leq N(t_1)+\left[N(t_2^{+})-N(t_2^{-})\right]$. Hence, $N(t)\leq N_0+\sum\limits_{i=1}^{2}\left[N(t_i^{+})-N(t_i^{-})\right]$ for $t\in[0,t_2]$. Similarly, we obtain $N(t)\leq N_0+\sum\limits_{i=1}^{p}\left[N(t_i^{+})-N(t_i^{-})\right]$ for $t\in[0,t_{max})$.
	\end{enumerate}
	We conclude that solutions of equation \eqref{equ 3} are bounded and must be defined on $\mathbb{R}^{+}$, implying that $t_{max}=+\infty$.
\end{proof}
\section*{Acknowledgments}
 We thank the anonymous reviewers for their valuable time, insightful comments, and helpful suggestions.
 \vspace{-0.25cm}
\section*{Financial disclosure}
The authors declare that they have no known competing financial interests or personal relationships that could have appeared to influence the work reported in this paper.
\vspace{-0.15cm}
\section*{Conflict of interest}
This work does not have any conflicts of interest.


\begin{thebibliography}{99}%in appearance order.
\bibitem{3}
World Health Organization, Coronavirus disease 2019 (COVID-19): situation report, 99, 2020.
\bibitem{11}
Z. Abbasi, I. Zamani, A. H. Amiri Mehra, Mohsen Shafieirad, Asier Ibeas, Optimal Control Design of Impulsive SQEIAR Epidemic Models with Application to COVID-19, Chaos, Solitons \& Fractals, Volume 139, 2020, 110054.
\bibitem{Althaus}
C. L. Althaus. Estimating the reproduction number of Ebola virus (EBOV) during the 2014 outbreak in West Africa. PLoS currents, 2014, vol. 6.
\bibitem{8}
S. I. Araz. Analysis of a COVID-19 model: optimal control, stability and simulations. Alexandria Engineering Journal, 2021, vol. 60, no 1, p. 647-658.
\bibitem{Arino}
J. Arino, F. Brauer, P. V. D. Drissche, et al. A model for influenza with vaccination and antiviral treatment. Journal of theoretical biology, 2008, vol. 253, no 1, p. 118-130.
\bibitem{12}
J. K. K. Asamoah, E. Okyere, A. Abedimi, et al. Optimal control and comprehensive cost-effectiveness analysis for COVID-19. Results in Physics, 2022, vol. 33, p. 105177.
\bibitem{16}
E. M. Aquino, I. H. Silveira, J. M. Pescarini, et al. Social distancing measures to control the COVID-19 pandemic: potential impacts and challenges in Brazil. Ciencia \& saude coletiva, 2020, vol. 25, p. 2423-2446.
\bibitem{21}
D. Bainov, D. D. Bainov, and P. S. Simeonov. Systems with impulse effect: stability, theory, and applications. Ellis Horwood, 1989.
\bibitem{24}
D. Bainov, and P. Simeonov. Impulsive differential equations: periodic solutions and applications. Vol. 66. CRC Press, 1993.
\bibitem{23}
M. Benchohra, J. Henderson, and S. Ntouyas. Impulsive differential equations and inclusions. Vol. 2. New York: Hindawi Publishing Corporation, 2006.

\bibitem{Clakre}
F. Clarke. Functional analysis, calculus of variations and optimal control. London : Springer, 2013.

\bibitem{27}
M. De La Sen, R. P. Agarwal, A. Ibeas, et al. On a generalized time-varying SEIR epidemic model with mixed point and distributed time-varying delays and combined regular and impulsive vaccination controls. Advances in Difference Equations, 2010, vol. 2010, p. 1-42.
%\bibitem{1}
%A. P. Dobson, and E. R. Carper. Infectious diseases and human population history. Bioscience, 1996, vol. 46, no 2, p. 115-126.
\bibitem{18}
N. G. Davies, P. Klepac, Y. Liu, et al. Age-dependent effects in the transmission and control of COVID-19 epidemics. Nature medicine, 2020, vol. 26, no 8, p. 1205-1211.
\bibitem{14}
H. R. Guner, I. Hasanglu, and F. Aktas. COVID-19: Prevention and control measures in community. Turkish Journal of medical sciences, 2020, vol. 50, no 9, p. 571-577.

\bibitem{Haddad}
W. M. Haddad et V. Chellaboina. Stability and dissipativity theory for nonnegative dynamical systems: a unified analysis framework for biological and physiological systems. Nonlinear Analysis: Real World Applications, 2005, vol. 6, no 1, p. 35-65.
\bibitem{26}
J. Hui, and L. Chen. Impulsive vaccination of SIR epidemic models with nonlinear incidence rates. Discrete \& Continuous Dynamical Systems-B, 2004, vol. 4, no 3, p. 595.
\bibitem{20}
V. Lakshmikantham, and P. S. Simeonov. Theory of impulsive differential equations. Vol. 6. World scientific, 1989.
\bibitem{7}
G. B. Libotte, S. F. Lobato, G. M. Platt, et al. Determination of an optimal control strategy for vaccine administration in COVID-19 pandemic treatment. Computer methods and programs in biomedicine, 2020, vol. 196, p. 105664.
\bibitem{Li}
Q. Li, X. Guan, P. Wu et al. Early transmission dynamics in Wuhan, China, of novel coronavirus–infected pneumonia. New England journal of medicine, 2020.
%	\bibitem{2}
%	A. J. Mcmichael. Environmental and social influences on emerging infectious diseases: past, present and future. Philosophical Transactions of the Royal Society of London. Series B: Biological Sciences, 2004, vol. 359, no 1447, p. 1049-1058.
\bibitem{17}
S. M. Moghadas, M. C. Fitzpatrick, P. Sah, et al. The implications of silent transmission for the control of COVID-19 outbreaks. Proceedings of the National Academy of Sciences, 2020, vol. 117, no 30, p. 17513-17515.
\bibitem{13}
S. Nana-Kyere, F. A. Boateng, P. Jonathan, et al. Global Analysis and optimal control model of COVID-19. Computational and Mathematical Methods in Medicine, 2022, vol. 2022.
\bibitem{15}
Y. Jin, H. Yang, W. JI, and al. Virology, epidemiology, pathogenesis, and control of COVID-19. Viruses, 2020, vol. 12, no 4, p. 372.
\bibitem{6}
T. A. Perkins and G. Espana. Optimal control of the COVID-19 pandemic with non-pharmaceutical interventions. Bulletin of mathematical biology, 2020, vol. 82, no 9, p. 1-24.
\bibitem{Riou}
J. Riou, and C. L. Althaus. Pattern of early human-to-human transmission of Wuhan 2019 novel coronavirus (2019-nCoV), December 2019 to January 2020. Eurosurveillance, 2020, vol. 25, no 4, p. 2000058.
\bibitem{22}
A. M. Samoilenko, and N. A. Perestyuk. Impulsive differential equations. world scientific, 1995.
\bibitem{10}
Z. H. Shen, Y. M. Chu, M. A. Khan, et al. Mathematical modeling and optimal control of the COVID-19 dynamics. Results in Physics, 2021, vol. 31, p. 105028.
%\bibitem{4}
%J. K. Taubenberger, J. C. Kash, and D. M. Morens. The 1918 influenza pandemic: 100 years of questions answered and unanswered. Science translational medicine, 2019, vol. 11, no 502, p. eaau5485.
\bibitem{9}
M. Zamir, Z. Shah, F. Nadeem, et al. Non pharmaceutical interventions for optimal control of COVID-19. Computer methods and programs in biomedicine, 2020, vol. 196, p. 105642.
\bibitem{19}
S. Zhao, et H. Chen. Modeling the epidemic dynamics and control of COVID-19 outbreak in China. Quantitative biology, 2020, vol. 8, no 1, p. 11-19.
%\bibitem{5}
%P. Ziegler. The black death. Faber \& Faber, 2013.
\bibitem{25}
L. Wang, L. Chen, and J. J. Nieto. The dynamics of an epidemic model for pest control with impulsive effect. Nonlinear Analysis: Real World Applications, 2010, vol. 11, no 3, p. 1374-1386.

\end{thebibliography}
\end{document}